\def\hf {\mbox{$\frac{1}{2}$}}
\def\ed { \stackrel{d}{=} }
\def\E { {\mathbb{E}}}
\def\P { {\mathbb{P}}}
\def\R { {\mathbb{R}}}
\def\Z { {\mathbb{Z}}}
\def\CC{ {\mathcal{C} }}
\def\1{\mathbbm{1} }
\def\Le{Lévy }
\def\LK{Lévy-Khinchine }
\def\LI{Lévy-It\^o }
\newcommand{\eps}{\varepsilon}
\newcommand{\Tsvar}{ (T(s), s > 0)}
\newcommand{\ssarate}{rate }
\def \endprf{\hfill {\vrule height6pt width6pt depth0pt}\medskip} 
\newenvironment{proof}{\noindent {\bf Proof} }{\endprf\par}
\newtheorem{theorem}{Theorem}[section]
\newtheorem{lemma}[theorem]{Lemma}
\newtheorem{proposition}[theorem]{Proposition}
\newtheorem{corollary}[theorem]{Corollary}
\newtheorem{example}[theorem]{Example}
\numberwithin{equation}{section}
\providecommand{\customgenericname}{}
\newcommand{\newcustomtheorem}[2]{%
  \newenvironment{#1}[1]
  {%
   \renewcommand\customgenericname{#2}%
   \renewcommand\theinnercustomgeneric{##1}%
   \innercustomgeneric
  }
  {\endinnercustomgeneric}
}
\title{The range of a self-similar additive gamma process is a scale invariant Poisson point process}
\author{Jim Pitman and Zhiyi You}
\date{\today}
\begin{document}
	
\maketitle

\section*{Abstract}

It is shown that for a non-decreasing self-similar stochastic process $T$ with independent increments,
the range of $T$ forms a Poisson point process with $\sigma$-finite intensity if and only if the one-dimensional distribution of $T(1)$ is of the gamma type. This follows from a general hold-jump description of such processes $T$, and implies the known result that the spacings between consecutive points of a scale invariant Poisson point process, with intensity $\theta x^{-1} dx$, are the points of another scale invariant Poisson point process with the same intensity.


\section*{Key words}

scale invariant Poisson spacings, Poisson point processes, self-similar processes, Sato processes, range, records, extremal processes

\section{Introduction}\label{sec:intro}
A point process $N_X(B) = \# \{z \in \Z : X_z \in B\}$, counting numbers of points in subintervals $B$ of the positive half line $\R_+ := (0, \infty)$, for some indexing of these points $X_z$ by $z$ in the set of integers $\Z$, is called {\em scale invariant} if for each $c > 0$ the point process $N_{cX}$, counting the scaled points $\{c X_z: z \in \Z\}$, has the same finite-dimensional distributions as $N_X$, as $B$ varies over subintervals of $\R_+$.
Assuming the point process $N_X$ is {\em simple}, meaning the points $X_z$ are all distinct, the point process $N_X$ is then regarded as encoding the random countable set 
\begin{equation}
    range(X_z, \in \Z):= \{ X_z, \in \Z \}.
\end{equation}
So the identity in distribution of simple point processes $N_X$ and $N_{cX}$ may be indicated by the notation
\begin{equation}
    range ( X_z, z \in \Z ) \ed range ( c  X_z, z \in \Z ).
\end{equation}

It is well known that a {\em Poisson point process} ({\em PPP}) on $\R_+$ is scale invariant if and only if its intensity measure is $\theta x^{-1}dx$ for some $\theta \ge 0$, when the process is called a $PPP(\theta x^{-1} dx)$, or a {\em scale invariant Poisson point process} with {\em rate} $\theta$.
So the parameter $\theta$ is the intensity of such a point process relative to the scale invariant measure $x^{-1}dx$ on the positive half-line, and for $0 < a < b < \infty$, the number of points in $(a, b)$ has a Poisson distribution with mean $\theta \log(b / a)$.

The following result on scale invariant Poisson spacings arises in various contexts.
The case $\theta = 1$ is contained in the theory of records and extremal processes, first developed by Dwass \cite{dwass1964extremal, dwass1966extremal, dwass1974extremal} and further studied by Resnick and Rubinovitch \cite{resnick1973structure}, and Shorrock \cite{shorrock1974discrete}.
The formulation for general $\theta > 0$ is due to Arratia \cite{arratia1998central, arratia2002amount}, who sketched a proof which was later detailed by Arratia, Barbour and Tavar\'e \cite[Section 7]{arratia2006tale}.

\begin{theorem}[Scale invariant Poisson spacings]\label{thm:sips}
    Fix $\theta > 0$. Let $(T_z, z \in \Z)$,  with $T_{z} < T_{z+1}$ for all $z \in \Z$, be an exhaustive listing of the points of a scale invariant PPP with rate $\theta$.
    Then 
    \begin{equation}\label{eqn:sips}
        range (T_{z+1} - T_z, z \in \Z) \ed range (T_z, z \in \Z).
    \end{equation}
\end{theorem}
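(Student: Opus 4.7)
The plan is to obtain Theorem~\ref{thm:sips} as a direct corollary of the paper's main characterization theorem, which asserts that a non-decreasing self-similar additive process $T$ has range forming a $\sigma$-finite PPP if and only if $T(1)$ is gamma distributed. I would fix any self-similarity index $H > 0$ and invoke Sato's construction to produce a non-decreasing self-similar additive process $(T(s), s > 0)$ with $T(1) \sim \mathrm{Gamma}(\theta, 1)$, equivalently $T(s) \sim \mathrm{Gamma}(\theta, s^{-H})$ for every $s > 0$. By the characterization theorem, the random set $\{T(s) : s > 0\}$ is a scale invariant PPP with rate $\theta$, so I may identify $\{T_z : z \in \Z\}$ in law with this range. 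Because $T$ is non-decreasing, pure-jump, and its range accumulates only at $0$ and $\infty$, consecutive range points $T_z < T_{z+1}$ are exactly the endpoints of a single jump of $T$, so $T_{z+1} - T_z$ equals the size of that jump. It therefore suffices to prove that the set of jump sizes of $T$ forms a scale invariant PPP with rate $\theta$.

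The second step uses the L\'evy-It\^o representation for the additive process $T$ to realize its jumps as a Poisson point process on $(0,\infty) \times (0,\infty)$ (time $\times$ size). Since $T(s) \sim \mathrm{Gamma}(\theta, s^{-H})$ has L\'evy measure $\nu_s(dy) = \theta y^{-1} e^{-y s^{-H}}\, dy$, differentiating in $s$ gives the space-time jump intensity $H \theta s^{-H-1} e^{-y s^{-H}} \, ds\, dy$. Integrating over $s \in (0,\infty)$ via the substitution $u = y s^{-H}$ yields marginal intensity $\theta y^{-1} dy$ in $y$, independent of $H$. Hence the projection of this space-time PPP onto the size axis is a scale invariant PPP with rate $\theta$, which combined with the first paragraph proves (\ref{eqn:sips}).

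The main obstacle is the construction of the jump Poisson point process in the second step: rigorously identifying the jumps of the Sato process $T$ as a two-dimensional PPP requires an inhomogeneous L\'evy-It\^o representation for additive (rather than L\'evy) processes, and the time-marginalization must then be justified. Once that structure is in place, the scaling computation is elementary, and the gamma-specific intensity $\theta y^{-1}$ emerges precisely because the L\'evy density $\theta y^{-1} e^{-y}$ of $\mathrm{Gamma}(\theta,1)$ has $s$-integral equal to $\theta y^{-1}$ under the self-similarity scaling, singling out the gamma case as the unique family for which such an identity holds.
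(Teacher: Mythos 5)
Your proposal is correct and follows essentially the same route as the paper: reduce Theorem~\ref{thm:sips} to the gamma case of Theorem~\ref{thm:GammatoPPP} for the points themselves, and identify the spacings with the jump sizes of the SSA gamma process, whose size-marginal intensity $\theta y^{-1}dy$ comes from the \LI representation exactly as in the paper's Theorem~\ref{thm:ktothetaJ} and part (III) of Theorem~\ref{thm:ssarate}. One small correction: the fact that the jump sizes form a scale invariant PPP with rate $\theta$ is \emph{not} specific to the gamma case --- it holds for every non-decreasing SSA process with finite rate $\theta$ regardless of the key function; what singles out the gamma family is that the \emph{range} is then Poissonian, which is the content of the ``only if'' direction of Theorem~\ref{thm:GammatoPPP}.
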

Less formally, the theorem states:
\begin{itemize}
    \item {\em the spacings between consecutive points of a scale invariant Poisson point process on the positive half-line are the points of another scale invariant Poisson point process with the same rate}.
\end{itemize}

This article places Theorem \ref{thm:sips} in the broader context of stochastic processes $T = \Tsvar$ which are {\em self-similar additive} ({\em SSA}), meaning that
\begin{itemize}
    \item $T$ is {\em self-similar}: for all $c > 0$,
    \begin{equation}\label{eqn:1selfsimilar}
        (T({cs}), s > 0) \ed (c T(s), s > 0);
    \end{equation}
    \item $T$ is {\em additive}: meaning $T$ has independent increments, and $T$ is stochastically continuous with càdlàg paths starting at $0$
    \begin{equation}
        \lim_{s \to 0+}T(s) = 0.
    \end{equation}
\end{itemize}

Such a process $T$ is also called 
\begin{itemize}
    \item a {\em process of class $L$} \cite{sato1991self}, as the distribution of $T(1)$ is of class $L$, a subclass of infinitely divisible laws studied by \Le \cite{levy1937theorie}, or 
    \item or a {\em Sato process} \cite{carr2007self}, and these processes were studied in depth by Sato \cite{sato1991self,sato1999levy}.
\end{itemize}

Here we focus on $range(T)$, the set of all values ever visited by $T$, regarded as a random subset of $\R_+$, for a non-negative, hence non-decreasing SSA process $T$ with no drift component. 
As recalled in Section \ref{sec:SSA}, it is known that for such an SSA process $T$, the \LK representation of the Laplace transform of $T(1)$ has the special form
\begin{equation}\label{eqn:lknonneg}
    \log \E e^{-u T(1)} = \int_0^\infty \left(e^{-ux} - 1\right) \frac {k(x)} x dx,
\end{equation}
for a uniquely determined right continuous non-increasing {\em key function} $k : \R_+ \to \R_+$ subject to 
\begin{equation}\label{eqn:integr}
    \int_0^\infty (x \wedge 1) \frac {k(x)} x dx < \infty.
\end{equation}

So the \Le measure of $T(1)$ has a density relative to Lebesgue measure $dx$ of the form $k(x) / x$ for such a key function $k(x)$.

Consider now the random countable set of jump times of $T$:
\begin{equation}\label{eqn:jumptimes}
    \{ s >0 : T(s) > T(s-) \} .
\end{equation}
Let $k(0+):= \lim_{x \to 0+} k(x)$.
It follows easily from self-similarity of $T$ that
\begin{itemize}
\item either $k(0+) = \infty$ and the set of jump times is dense in $\R_+$;
\item or $k(0+) < \infty$ and the jump times are the points of a scale invariant PPP with rate $\theta = k(0+)$,
when we say the SSA process $T$ has {\em (finite) \ssarate $\theta$}.
\end{itemize}

The identification of the \ssarate $\theta = k(0+)$ is part of an explicit hold-jump construction of $T$ with finite rate, provided later in Theorem \ref{thm:ktothetaJ}. 
Taking the jump sizes into consideration, as well as the jump times, the \LI representation of jumps, discussed in Section \ref{sec:jumps}, shows that the random countable set of ordered pairs of jump times and jump sizes
\begin{equation}\label{eqn:rangejump}
    \{ (s, T(s) - T(s-)) : s > 0, T(s) > T(s-)  \}
\end{equation}
is the set of points of a scale invariant Poisson point process on $\R_+ ^2$. 
This leads to the following theorem:

\begin{theorem}\label{thm:ssarate}
    For a SSA non-decreasing process $T$, with no drift component and key function $k(x)$, if $k(0+) = \theta$ is finite, then
    \begin{itemize}
        \item[(I)] $range(T):= \{ T(s): s >0 \}$ is a scale invariant point process on $\R_+$ with rate $\theta$;
        \item[(II)] the set of jump times of $T$ is a scale invariant PPP on $\R_+$ with rate $\theta$;
        \item[(III)] the set of jump sizes of $T$ is a scale invariant PPP on $\R_+$ with rate $\theta$.
    \end{itemize}
\end{theorem}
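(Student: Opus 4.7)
The plan is to exploit the fact that the Lévy–Itô description of the jumps of a non-decreasing SSA process with no drift is a Poisson point process on $\R_+^2$ whose intensity, by self-similarity of $T$, is invariant under the diagonal scaling $(s,x)\mapsto(cs,cx)$. In log-ratio coordinates $(u,r):=(\log s,\,x/s)$ this invariance becomes pure translation invariance in $u$, so the intensity factors as $du\otimes\rho(dr)$ for a uniquely determined measure $\rho$ on $\R_+$. A short pushforward calculation matching the Lévy measure $k(x)/x\,dx$ of $T(1)$ then identifies $k(x)=\rho((x,\infty))$, and in particular $\rho(\R_+)=k(0+)=\theta$.

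Parts (II) and (III) are now immediate projections of the jump PPP $\Pi$ on $\R_+^2$: the $s$-marginal of $du\otimes\rho(dr)$ is $\theta\,ds/s$, proving (II), while the pushforward under $(u,r)\mapsto e^u r$ gives $\theta\,dx/x$, proving (III). The scale invariance of all three sets is immediate from the self-similarity of $T$.

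Part (I) is the main obstacle. Since $T$ is piecewise constant between jumps, $\mathrm{range}(T)=\{T(s_n):n\in\Z\}$, where the $s_n$ enumerate the jump times of $T$ in increasing order. Scale invariance of this set reduces the PPP property to checking void probabilities on intervals and verifying their independence on disjoint intervals. One observes that $\mathrm{range}(T)\cap(a,b)=\emptyset$ is precisely the event that the first jump reaching level $\ge a$ overshoots past $b$, i.e.\ $T(\tau_a)\ge b$, where $\tau_a:=\inf\{s:T(s)\ge a\}$. Self-similarity gives $(T(\tau_a),\tau_a)\ed (a\,T(\tau_1),a\tau_1)$, so this probability equals $F(b/a)$ with $F(c):=\P(T(\tau_1)\ge c)$. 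The central computation is then to show $F(c)=c^{-\theta}$ for $c>1$, equivalently $\log T(\tau_1)\sim\mathrm{Exp}(\theta)$, the Pareto overshoot characteristic of a scale-invariant PPP with rate $\theta$. This is where the forthcoming hold–jump construction of Theorem~\ref{thm:ktothetaJ} is essential: it supplies the explicit transition law from one range value to the next in terms of $\rho$ and $\theta$, from which the Pareto distribution of $T(\tau_1)$ can be read off. Independence of the void events on disjoint intervals then follows from the independent-increments property of $T$ applied at the passage times, together with the fact that the post-$\tau_b$ restriction of $T$ is, up to scaling, distributed again as an SSA process with the same key function.
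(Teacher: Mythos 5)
Your treatment of (II) and (III) is fine and is essentially the paper's: both are read off as coordinate projections of the underlying jump point process on $\R_+^2$ with intensity $\theta s^{-1}ds\,\P(sJ\in dx)$, exactly the change of variables recorded in \eqref{eqn:levyMeasureTransform} (the paper obtains (III) by setting $a=\infty$ there).

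For part (I), however, you have misread what is being claimed, and the statement you set out to prove is false in general. Part (I) does \emph{not} assert that $range(T)$ is a Poisson point process; it asserts only that $range(T)$ is a scale invariant (simple) point process whose mean intensity measure is $\theta x^{-1}dx$ --- the paper says this explicitly right after the theorem. Your plan is to verify the Poisson void probabilities, i.e.\ to show $\P(T(\tau_1)\ge c)=c^{-\theta}$ together with independence of void events on disjoint intervals. If that program succeeded it would prove that $range(T)$ is a $PPP(\theta x^{-1}dx)$ for \emph{every} key function $k$ with $k(0+)=\theta$, which contradicts the ``only if'' half of Theorem \ref{thm:GammatoPPP}: the range is Poissonian only when $T(1)$ is gamma. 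The concave-majorant example in Section \ref{sec:jumps} is an explicit case (rate $\hf$, generic jump gamma$(\hf,\hf)$) where $range(T)$ is scale invariant with rate $\hf$ but is provably not Poisson, so the overshoot $T(\tau_1)$ there is not Pareto$(\theta)$. Your auxiliary claim that ``the post-$\tau_b$ restriction of $T$ is, up to scaling, distributed again as an SSA process with the same key function'' is also false: conditionally on the passage time, the post-passage increment process is additive but not self-similar from time $0$, so the independence of void events does not follow this way either.

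What (I) actually requires is only a first-moment computation, and this is what the paper does: a level $t$ lies in $range(T)$ exactly when the value $G_t=T(S(t)-)$ attained just below the jump over $t$ falls in $dt$, so by Lemma \ref{thm:Jumpover1lemma} with $g=t$, $y=0$, the mean measure of $range(T)$ at $dt$ is
\begin{equation*}
\int_0^\infty \frac{\theta\,ds}{s}\,\P\bigl(T(s)\in dt\bigr)
=\int_0^\infty \frac{\theta\,ds}{s}\,\P\bigl(sT(1)\in dt\bigr)
=\frac{\theta\,dt}{t},
\end{equation*}
by the same Fubini/substitution argument as in \eqref{eqn:levyMeasureTransform}. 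Combined with the evident scale invariance of $range(T)$ inherited from \eqref{eqn:1selfsimilar}, this is all that (I) asserts. If you replace your void-probability program for (I) by this intensity computation, your argument becomes correct and coincides with the paper's.
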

Here, (II) and (III) are just two coordinate projections of the self-similar Poisson point process \eqref{eqn:rangejump} in the positive quadrant. 
Note that in (I) it is not asserted that $range(T)$ is Poisson point process, only that this point process is scale-invariant with mean intensity measure $\theta x^{-1} dx$.
Beyond that, we know rather little about attributes of $range(T)$ as a point process on $\R_+$, such as its higher order factorial moments or Janossy measures, except in the special case when $T(1)$ is gamma distributed, and range$(T)$ turns out to be Poissonian.

To provide a more detailed description of the range of a non-decreasing SSA process $T$ with finite rate, let the scale invariant PPP of jump times of $T$ be indexed by $\Z$ in an increasing way, say 
\begin{equation}
    0 < \cdots < S_{-1} < S_0 < S_1 < \cdots < \infty,
\end{equation}
and define for each $z \in \Z$
\begin{equation}
    T_z:= T(S_z).
\end{equation}
So the jump of $T$ at time $s = S_z$ is from $T_{z-1} = T(S_z-)$ to $T_z = T(S_z)$.
See Figure \ref{fig:gamma2} for an illustration. 
Then the random countable range of $T$ is
\begin{equation}
    range(T(s), s > 0 ) = range(T_z, z \in \Z) 
\end{equation}
so the points in the range of $T$ are also indexed by $\Z$ in increasing order
\begin{equation}\label{eqn:incorder}
    0 < \cdots < T_{-1} < T_0 < T_1 < \cdots < \infty,
\end{equation}
as in the setup in for scale-invariant Poisson spacings in Theorem \ref{thm:sips}. 

Here we are not specific about how $S_0$ is selected from the random set of jump times of $T$, since the choice is not important when considering the simple point process $\{(T_z, T_{z+1} - T_z) : z \in \Z\}$ as a random set on $\R_+^2$.
It will be more convenient to define $S_0$ either so that $S_0 \le 1 < S_1$ or so that $T_0 \le 1 < T_1$.

Thus for a scale-invariant point process constructed as the range of a non-decreasing SSA process $T$ with finite \ssarate $\theta$,
comparing the point process of spacings between points, as on the left side of \eqref{eqn:sips}, and the
points themselves on the right side of \eqref{eqn:sips}, 
\begin{itemize}
    \item the spacings between points are the jump sizes of $T$:
    \begin{equation}\label{eqn:tspacings}
        range(T_{z+1} - T_z, z \in \Z ) = range( T(s) - T({s-}) > 0 : s > 0 )
    \end{equation}
which forms a scale invariant PPP with rate $\theta$, no matter what the key function $k(x)$ of $T$ with $k(0+) = \theta$;
    \item the points themselves form the range of $T$:
    \begin{equation}\label{eqn:spacon}
        range( T_z, z \in \Z ) = range\Tsvar
    \end{equation}
which is a scale invariant point process with rate $\theta$, which might or might not be Poissonian, depending on the choice of
the key function.
\end{itemize}

\begin{figure}[ht]
	\centering
	\includegraphics[width=400px]{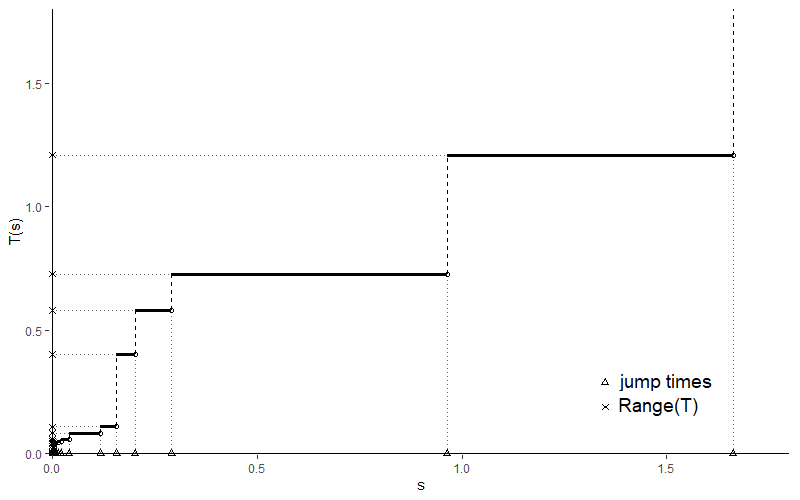}
	\caption{An SSA gamma process $\Tsvar$, its range, and jump times}\label{fig:gamma2}
\end{figure}

So to establish the spacings theorem \ref{thm:sips}, for a scale-invariant Poisson process with rate $\theta$, it only remains to show that for a suitable choice of the key function $k(x)$ with $k(0+)= \theta$, the range of $T$ is in fact Poissonian.
That scale-invariant Poisson feature of range$(T)$, with rate $\theta = 1$, was established by Dwass in the 1960s for the {\em SSA exponential process} $T$, with $k(x) = e^{-\lambda x}$ for some $\lambda >0$, so $T(1) \sim$ gamma$(1, \lambda)$ (or exp$(\lambda)$) has the exponential distribution with rate $\lambda$, with tail probability
\begin{equation}
    \P( T(1) > x ) = e^{-\lambda x } 1(x > 0).
\end{equation}

Our main point here is that the Poisson spacings theorem for a general rate $\theta >0$ is a similar consequence of the following result, which we establish in Sections \ref{sec:proof1} and \ref{sec:proofUnique}:
\begin{theorem}\label{thm:GammatoPPP}
    Fix $\theta > 0$.  For an SSA process $T := \Tsvar$, 
    \begin{itemize}
        \item range$(T)$ is a scale invariant PPP on $\R_+$ with rate $\theta$,
    \end{itemize}
    if and only if
    \begin{itemize}
        \item the distribution of $T(1)$ is gamma$(\theta, \lambda)$ for some $\lambda > 0$, with probability density
        \begin{equation}\label{gammadensity}
            \frac{d}{dx} \P( T(1) \le x ) = \frac {\lambda^\theta} {\Gamma (\theta)} x ^{\theta-1} e ^{-\lambda x} 1(x > 0).
        \end{equation}
    \end{itemize}
\end{theorem}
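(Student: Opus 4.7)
My plan is to use the explicit Poisson random measure (PRM) description implicit in the proof of Theorem~\ref{thm:ktothetaJ}: for $k(0+) = \theta < \infty$ one can realise $T(s) = \sum_{(u,y)\in M,\ u \le s} u y$, where $M$ is a PRM on $\R_+^2$ with intensity $\mu(du,dy) = \theta u^{-1}\,du\,F(dy)$, and $F$ is the probability distribution on $\R_+$ with $\overline F(y) := F((y,\infty)) = k(y)/\theta$. In the gamma case $k(x) = \theta e^{-\lambda x}$ one has $F = \mathrm{Exp}(\lambda)$, and each jump at time $u$ is $uY$ with $Y \sim \mathrm{Exp}(\lambda)$ independent of the history; the memoryless property of $Y$ is the engine of the ``if'' direction, and the characterization of the exponential as the unique memoryless distribution is the engine of the ``only if'' direction.

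For the ``if'' direction I would first compute single-interval void probabilities. Setting $\tau_a := \inf\{s : T(s) > a\}$, $\mathrm{range}(T)$ avoids $(a,b)$ exactly when the jump that clears level $a$ also clears level $b$. Conditional on $\tau_a = s$ and the pre-jump value $T(\tau_a-) = w \le a$, this jump has size $sY$ with $Y \sim \mathrm{Exp}(\lambda)$ conditioned on $sY > a-w$; by memorylessness the overshoot $T(\tau_a)-a$ is $\mathrm{Exp}(\lambda/s)$, \emph{independent of $w$}. Coupled with the general SSA identity $1/\tau_a \stackrel{d}{=} T(1)/a$ (which follows from $\{\tau_a \le t\} = \{T(t) > a\}$ and the self-similar marginal $T(t) \stackrel{d}{=} tT(1)$), together with $T(1) \sim \mathrm{gamma}(\theta,\lambda)$, this gives
\[
\P\bigl(\mathrm{range}(T)\cap(a,b) = \emptyset\bigr) = \P(T(\tau_a) \ge b) = \E\!\left[e^{-\lambda(b-a)/\tau_a}\right] = \left(\frac{a}{b}\right)^{\theta},
\]
which is exactly the void probability of $\mathrm{PPP}(\theta x^{-1}dx)$ on $(a,b)$. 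To upgrade to the full Poisson property, I would iterate at successive first-passage times $\tau_{a_1} < \tau_{a_2} < \cdots$ for disjoint intervals $(a_i,b_i)$: the restriction $M|_{(\tau_{a_i},\infty)\times\R_+}$ is independent of the past by the PRM property, and the scale invariance of the time intensity $u^{-1}du$ together with memorylessness of $Y$ lets me apply the same calculation after resetting the scale. The resulting product form $\prod_i (a_i/b_i)^\theta$ for joint void probabilities on disjoint intervals characterises $\mathrm{range}(T)$ as the scale invariant $\mathrm{PPP}(\theta x^{-1}dx)$.

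For the ``only if'' direction, assume $\mathrm{range}(T)$ is $\mathrm{PPP}(\theta x^{-1}dx)$. The Poisson property implies that on logarithmic scale the points of $\mathrm{range}(T)$ form a translation-invariant Poisson process of rate $\theta$; in particular, the undershoot $1 - T(\tau_1-)$ and the overshoot $T(\tau_1)-1$ at level 1 are independent. From the PRM representation, the conditional overshoot given $(\tau_1, T(\tau_1-)) = (s,w)$ has survival function $\overline F((1-w+x)/s)/\overline F((1-w)/s)$. Combining the required independence of overshoot and undershoot at every level $a$---linked by the self-similar scaling $T(\tau_a-)/a \stackrel{d}{=} T(\tau_1-)$, $\tau_a/a \stackrel{d}{=} \tau_1$---with the void-probability constraint $(a/b)^\theta$, one extracts the memoryless identity $\overline F(u+h) = \overline F(u)\overline F(h)$ for all $u,h \ge 0$. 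By right-continuity and monotonicity of $k = \theta \overline F$, the unique nonzero solutions are $\overline F(y) = e^{-\lambda y}$ for some $\lambda > 0$, so $k(x) = \theta e^{-\lambda x}$ and $T(1) \sim \mathrm{gamma}(\theta,\lambda)$.

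The main technical hurdle is twofold. In the ``if'' direction, the continuation of the SSA gamma process beyond $\tau_a$ is not itself an SSA gamma process (the SSA property is not shift-invariant in time), so the iteration must carefully combine the strong Markov property of the additive process $T$ with the self-similar rescaling $u \mapsto u/\tau_a$, rather than leaning on a clean regeneration. In the ``only if'' direction, cleanly extracting the Cauchy/memoryless equation for $\overline F$ from averaged identities---without being blocked by the fact that the joint law of $(\tau_a, T(\tau_a-))$ itself depends on $\overline F$---is the heart of the argument; I would handle this by localising the deduction at a single Palm-conditioned atom of $M$ via Slivnyak's theorem, so as to isolate the ratio $\overline F(u+h)/\overline F(u)$ from the remainder of the process.
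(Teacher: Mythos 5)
Your single-interval computation for the ``if'' direction is correct and uses exactly the machinery the paper sets up: your marked PRM with intensity $\theta u^{-1}du\,F(dy)$ is the paper's $\eta$ from Theorem \ref{thm:ktothetaJ}, the identity $1/\tau_a \ed T(1)/a$ is sound, and the Laplace transform of gamma$(\theta,\lambda)$ does give the void probability $(a/b)^\theta$. The genuine gap is in the iteration step. You propose to condition on the first-passage data $(\tau_{a_1},T(\tau_{a_1}))$ and ``apply the same calculation after resetting the scale''; but given $(\tau_{a_1},T(\tau_{a_1}))=(s,v)$, the future range is $v+s\cdot\mathrm{range}(W)$ where $W(r)\ed T(r)-T(1)$ for $r>1$ is the post-$1$ increment process, which is \emph{not} a rescaled SSA process started at $0$, and whose first-passage law does not reduce to that of $T(1)$ by self-similarity. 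Concretely, for $\theta=\lambda=1$ one finds $\P(T(t)-T(s)\ge c)=(1-s/t)\,e^{-c/t}$, and the resulting conditional void probability of $(a_2,b_2)$ given $(s,v)$ depends explicitly on $s$ and is not $(a_2/b_2)^\theta$. The time $\tau_{a_1}$ is not measurable with respect to the range, and conditioning on it biases the law of the future range; the product formula only emerges after integrating $s$ out against its conditional law given the range so far, which is exactly the ``brutal integration'' of Arratia--Barbour--Tavar\'e that the paper's proofs are designed to avoid. The paper's first proof closes precisely this hole: it tracks the pairs $(S_n,T_n)$ at successive range points above $1$ and shows by beta--gamma algebra that $T_n/S_n\ed\gamma_{\theta+1}$ and is \emph{independent} of the ratios $T_0/T_1,\dots,T_{n-1}/T_n$; that extra piece of bookkeeping about the time coordinate is what lets the induction close, after which Lemma \ref{thm:sipbeta} finishes. (The paper's second proof is different again: a reflection symmetry of the corner set of the time-inverted path.)

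For the ``only if'' direction your plan is also left at the level of a hope. The independence of undershoot and overshoot at level $1$ is indeed forced by the Poisson hypothesis, but by Lemma \ref{thm:Jumpover1lemma} the joint law of $(S(1),G_1,D_1)$ involves both the unknown tail $\P(J>\cdot)=k(\cdot)/\theta$ and the unknown marginal $\P(T(s)\in dg)$ (itself a functional of $k$), all mixed over $s$; you do not show how Palm/Slivnyak conditioning isolates the ratio $\P(J>u+h)/\P(J>u)$ from this mixture, and it is not clear that it can. The paper instead proves a general uniqueness theorem (Theorem \ref{thm:unique}): for non-decreasing $1$-SSA processes with the same finite rate, equality in law of the ranges forces $T\ed c\tilde T$. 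This is done by writing $(T_n-T_{n-1})/(T_{n+1}-T_n)=\beta_nJ_{n-1}/J_n$, cancelling the beta factors via characteristic functions of logarithms (simplifiable random variables), and invoking a lemma that the law of consecutive ratios of an i.i.d.\ sequence determines the law up to a multiplicative constant. The converse of the theorem then follows by comparing $T$ with the SSA gamma process, whose range is already known to be Poisson from the ``if'' part. If you want to salvage your functional-equation route, you would need to supply the missing deconvolution argument; as written, both halves of your proposal stop at the point where the real work begins.
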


Implicit in this result is a well known consequence of the \LK formula:
\begin{equation}
\label{gammakey}
\mbox{ \em the gamma$(\theta,\lambda)$ distribution of $T(1)$ has key function $k(x) = \theta e^{- \lambda x }$}.
\end{equation}
However, some effort is required to pass from this fact to either of the implications of Theorem \ref{thm:GammatoPPP}.

The rest of this article is organized as follows:
\begin{itemize}
    \item Section \ref{sec:rangePPP} collects for later use some easy results about a scale invariant PPP.
    \item Section \ref{sec:SSA} recalls basic facts about SSA processes and selfdecomposable laws.
    \item Section \ref{sec:jumps} characterizes an SSA non-decreasing process $T$ with finite \ssarate by its generic jump distribution, which can be read from the key function in the \LK representation. 
    The corresponding {\em hold-jump description} of $T$ is the basis of proofs in Section \ref{sec:proof1}. 
    \item Section \ref{sec:proof1} offers proves the ``if'' part of Theorem \ref{thm:GammatoPPP}, that the range of an SSA gamma process is a PPP, in two different ways, based on the hold-jump description provided in Section \ref{sec:jumps}.
    \item Section \ref{sec:proofUnique} provides a general uniqueness theorem which includes the converse of Theorem \ref{thm:GammatoPPP}: under a technical condition, the distribution of the range of a SSA non-decreasing process uniquely determines the distribution of the process itself, up to a scale factor. 
    \item Section \ref{sec:2para} briefly discusses three different processes associated with a selfdecomposable law, then introduces a two-parameter process which provides a coupling of SSA processes with stationary independent increments in a second parameter.
    \item Section \ref{sec:history} offers some historical notes, including how the case $\theta = 1$ of Theorem \ref{thm:sips} arises in the theory of extremal processes, and how the general case $\theta > 0$ is related to the Ewens sampling formula.
\end{itemize}

\section{Scale invariant point processes}\label{sec:rangePPP}

If a point process $X:= \{X_z, z \in \Z\}$ on $\R_+$ is {\em scale invariant}, i.e.
\begin{equation} \label{eqn:scaleinv}
    range (X) \ed range( c X  ), \qquad \forall c  > 0,
\end{equation}
then $X$ is a simple point process with intensity measure $\theta x^{-1} dx$ for some $\theta$, with $0 < \theta \le \infty$.
Then we say $X$ is a scale invariant point process on $\R_+$ with {\em rate} $\theta$.
We observe that
\begin{itemize}
    \item the {\em inversion} $1/X := \{ 1/X_z, z \in \Z\}$ of a scale invariant point process $X$ on $\R_+$ is a scale invariant point process on $\R_+$ with the same rate.
\end{itemize}

By considering $L:= \log X$, this corresponds to a well known fact about stationary point processes on $\R$.
Just as not all stationary point processes on the line are reversible, not all scale invariant point process are invariant under inversion.
However, the distribution of a Poisson point process is entirely determined by its intensity measure, hence:
\begin{itemize}
    \item if $X:= \{X_z, z \in \Z\}$ is a scale invariant PPP then
    \begin{equation}\label{eqn:siptimeinv}
        range(1/X) \ed range(X).
    \end{equation}
\end{itemize}

The following lemma provides some useful characterizations of a scale invariant PPP.
Recall that for $U$ with uniform distribution on $[0,1]$, and $\theta >0$, the distribution of $U^{1/\theta}$ is the beta$(\theta,1)$ distribution on $[0,1]$ with probability density
\begin{equation}\label{betatheta}
    \frac{d}{du} \P( U^{1/\theta} \le u ) = \theta u ^{\theta -1} 1(0 < u < 1).
\end{equation}

\begin{lemma}\label{thm:sipbeta}
    Fix $x > 0$ and $\theta >0$. Suppose a scale invariant point process $X = \{ X_z, z \in \Z \}$ is indexed in increasing manner with
    \begin{equation}
        0 < \cdots < X_{-2} < X_{-1} < X_0 \le x < X_1 < X_2 < \cdots  < \infty.
    \end{equation}
Then the following three conditions are equivalent:
    \begin{itemize}
        \item $X$ is a scale invariant PPP with rate $\theta$;
        \item $x / X_1$ and $X_{z-1} / X_z$ for $z \ge 2$ are independent and identically distributed (i.i.d.) beta$(\theta,1)$ random variables;
        \item $X_0 / x$ and $X_{z-1} / X_z$ for $z < 0$ are i.i.d. beta$(\theta,1)$ random variables.
    \end{itemize}
\end{lemma}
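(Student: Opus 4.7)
The approach is to convert the statement into a standard fact about a stationary point process on $\R$ by taking logarithms. Let $L_z := \log X_z$ and $y_0 := \log x$; under this bijection, scale invariance of $X$ becomes translation invariance of $L$, the indexing constraint $X_0 \le x < X_1$ becomes $L_0 \le y_0 < L_1$, and a beta$(\theta,1)$ variable $V$ corresponds to $-\log V$, which by a direct CDF calculation has the exponential distribution with rate $\theta$. Under this correspondence the three conditions read: (i$'$) $L$ is a stationary PPP on $\R$ of rate $\theta$; (ii$'$) the forward gaps $L_1 - y_0,\, L_2 - L_1,\, L_3 - L_2, \ldots$ are i.i.d.\ exponential of rate $\theta$; and (iii$'$) the backward gaps $y_0 - L_0,\, L_0 - L_{-1},\, L_{-1} - L_{-2}, \ldots$ are i.i.d.\ exponential of rate $\theta$.

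The implications (i$'$) $\Rightarrow$ (ii$'$) and (i$'$) $\Rightarrow$ (iii$'$) are classical: for a PPP on $\R$ of rate $\theta$ and any deterministic point $y_0$, the restriction $L \cap (y_0, \infty)$ is again a PPP of rate $\theta$, with successive distances $L_1 - y_0, L_2 - L_1, \ldots$ i.i.d.\ exponential of rate $\theta$, and the analogous statement holds on the left half-line. The main content is the reverse implication (ii$'$) $\Rightarrow$ (i$'$). Condition (ii$'$) is exactly the characterization of a PPP of rate $\theta$ on $(y_0,\infty)$ by i.i.d.\ exponential interarrival times, so $L \cap (y_0, \infty)$ is already a PPP of rate $\theta$. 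Using translation invariance of $L$, the shifted process $L - (a - y_0)$ equals $L$ in distribution for any $a \in \R$, and therefore $L \cap (a, \infty)$ is a PPP of rate $\theta$ on $(a, \infty)$ for every $a$; letting $a \to -\infty$ determines the law of $L$ on every bounded interval and identifies $L$ as a PPP of rate $\theta$ on all of $\R$.

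The implication (iii$'$) $\Rightarrow$ (i$'$) follows by the symmetric argument, or equivalently by invoking the inversion $X \mapsto 1/X$ recorded in \eqref{eqn:siptimeinv}, which exchanges (ii) and (iii) while preserving both scale invariance and the Poisson property. The only subtle step is the translation-invariance extension in (ii$'$) $\Rightarrow$ (i$'$): the hypothesis supplies the beta structure only for the single fixed $y_0 = \log x$, but once this is combined with stationarity of the whole process it upgrades from a half-line Poisson conclusion to a global one. Everything else --- the log transform, the beta/exponential correspondence, and the gap characterization of the Poisson process --- is standard.
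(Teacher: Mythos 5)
Your proof is correct and follows essentially the same route as the paper's: pass to $L = \log X$, note that scale invariance becomes stationarity and beta$(\theta,1)$ ratios become exponential$(\theta)$ gaps, and invoke the classical gap characterization of a stationary Poisson process on the line. The paper states this in two sentences and leaves the details implicit; you have simply filled them in, including the one genuinely non-trivial step (upgrading the half-line conclusion to a global one via stationarity), which you handle correctly.
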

\begin{proof}
    Consider $L := \{ L_z = \log(X_z), z \in \Z \}$. Then $L$ is a stationary PPP with intensity measure $\theta d\ell$ on $\R$ if and only if $X$ is a scale invariant PPP with rate $\theta$.
    The statements of the lemma are just transformations of well known characterizations of a stationary PPP on the line.
\end{proof}

\section{Selfdecomposable laws and SSA processes}\label{sec:SSA}

This section recalls some known results about self-similar additive processes and their one-dimensional distributions, known as the {\em selfdecomposable laws}.
Following Sato \cite{sato1991self, sato1999levy}, we call a process $T := \Tsvar$ {\em self-similar with exponent $H$} if
\begin{equation}\label{eqn:hselfsimilar}
    (T({cs}), s > 0) \ed (c^H T(s), s > 0) \qquad ( c > 0 ) ,
\end{equation}
in the sense of equality of finite-dimensional distributions. If $T$ is also additive, as defined in Section \ref{sec:intro}, we say $T$ is {\em $H$-self-similar additive ($H$-SSA)}.
We omitted the prefix `$H$-' when $H=1$ in \eqref{eqn:1selfsimilar}, because it has no impact on our discussion of $range(T)$ thanks to Lemma \ref{thm:TimeChangeEasy} below.

Easily from the definition, for an $H$-SSA process $T$, the one-dimensional distribution of $T(1)$ is {\em selfdecomposable}, meaning that for each $0 < a < 1$, there is the equality in distribution
\begin{equation}
    T(1) \ed a T(1) + R_a,
\end{equation}
for some random variable $R_a$ independent of $T(1)$. It is known \cite[Theorem 15.3]{sato1999levy} that the class of selfdecomposable laws is identical to the class $L$ introduced by \Le \cite{levy1937theorie} and Khintchine \cite{khintchine1938limit}. \Le showed that such laws are infinitely divisible with a special structure of their \Le measure. Specifically, from Sato and Yamazato \cite{sato1978distribution},
the \LK representation of a selfdecomposable distribution of $X$ is
\begin{equation}\label{eqn:lk}
    \E e^{i u X} = \exp \left\{ i b u - \hf \sigma^2 u^2 + \int_{\R - \{ 0 \}} \left(e^{iux} - 1 - \frac {i u x} {1 + u^2} \right) \frac {k(x)} x dx \right\},
\end{equation}
where $b$ is real, $\sigma^2 \ge 0$ and $k(x)$ is both
\begin{itemize}
    \item[(a)] non-negative, non-increasing on $\R_+$ and non-positive, non-increasing on $\R_-$;
    \item[(b)] subject to the usual requirement for a \Le density $k(x) / x$ that
    \begin{equation}\label{eqn:levyInt}
        \int_{\R - \{ 0 \}} (x^2 \wedge 1) \frac {k(x)} x dx < \infty.
    \end{equation}
\end{itemize}

Hence, assuming the distribution of $X$ is infinitely divisible, the distribution is selfdecomposable if and only its \Le measure has a density of the form $k(x) / x$ where $k(x)$ satisfies condition (a) above.
Sato \cite{sato1991self} gave the following uniqueness theorem for the relationship between selfdecomposable laws and $H$-SSA processes.

\begin{theorem}\label{thm:satoSD}
    For each $H$-SSA process $\Tsvar$, the marginal distribution of $T(s)$ is selfdecomposable. And for each selfdecomposable distribution $\mu$ and each $H > 0$, there exists an $H$-SSA process $\Tsvar$, unique in finite-dimensional distributions, such that $T(1)$ has distribution $\mu$.
\end{theorem}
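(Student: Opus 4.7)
Both parts reduce to manipulations with characteristic functions, using independent increments and the scaling relation \eqref{eqn:hselfsimilar}.

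For the forward direction, fix $H$-SSA $T$ and any $a\in(0,1)$. Writing $T(1)=T(a)+[T(1)-T(a)]$ with the two terms independent by additivity, and using $T(a)\ed a^H T(1)$ from $H$-self-similarity, we get
\begin{equation*}
T(1) \ed a^H T(1)' + R_a,
\end{equation*}
for an independent $R_a:=T(1)-T(a)$. As $a$ ranges over $(0,1)$, so does $b=a^H$ (since $H>0$), so $T(1)$ satisfies the defining selfdecomposability identity for every factor $b\in(0,1)$.

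For the uniqueness half of the second assertion, fix $H$-SSA $T$ with $T(1)\sim \mu$, write $\phi_\mu$ for the characteristic function of $\mu$, and consider any $0<s_1<\cdots<s_n$. Since the $n$ increments $T(s_1),T(s_2)-T(s_1),\ldots,T(s_n)-T(s_{n-1})$ are independent, their joint law is determined by the marginal characteristic functions. Self-similarity gives $T(s_i)\ed s_i^H T(1)$, so
\begin{equation*}
\E e^{iu[T(s_{i+1})-T(s_i)]} = \frac{\phi_\mu(s_{i+1}^H u)}{\phi_\mu(s_i^H u)}.
\end{equation*}
This forces all finite-dimensional distributions from $\mu$ alone, giving uniqueness (and, incidentally, never uses that $\mu$ is selfdecomposable).

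For existence, I would use the ratio above to \emph{define} candidate increment distributions and then invoke Kolmogorov extension. The only nontrivial point is checking that each ratio is a valid characteristic function. But with $a=(s_i/s_{i+1})^H\in(0,1)$ and $v=s_{i+1}^H u$, selfdecomposability of $\mu$ yields $\phi_\mu(v)=\phi_\mu(av)\phi_{R_a}(v)$ for some probability distribution of $R_a$, so $\phi_\mu(v)/\phi_\mu(av)=\phi_{R_a}(v)$ is indeed a characteristic function. Consistency across refinements of the partition follows because the product of successive ratios telescopes to $\phi_\mu(s_n^H u)/\phi_\mu(s_1^H u)$, matching the distribution computed for a coarser partition. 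Kolmogorov's theorem then produces a process with the required finite-dimensional distributions, and $H$-self-similarity is then read off from those finite-dimensional distributions.

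The remaining obstacle, and the one requiring the most care, is the passage from Kolmogorov's extension (which only yields a process up to finite-dimensional distributions) to an additive process in the strict sense of the introduction — namely, one that is stochastically continuous and admits a càdlàg modification starting at $0$. Stochastic continuity follows from continuity of $s\mapsto \phi_\mu(s^H u)$ at any $s>0$ together with the ratio formula; and once stochastic continuity and independent increments are in hand, the standard construction of a càdlàg modification for processes with independent increments (as in Sato's monograph \cite{sato1999levy}) supplies the desired version. Finally $T(s)\to 0$ as $s\to 0+$ follows from $T(s)\ed s^H T(1)$ and $H>0$.
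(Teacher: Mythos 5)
The paper does not actually prove this theorem: it is quoted from Sato \cite{sato1991self} (see also \cite{sato1999levy}), so there is no in-paper argument to compare against. Your proof is essentially the standard one from that literature and is correct in outline: the forward direction via the independent decomposition $T(1)=T(a)+[T(1)-T(a)]$ together with $T(a)\ed a^{H}T(1)$, and existence/uniqueness via characteristic functions of increments, Kolmogorov extension, and the usual c\`adl\`ag regularization of a stochastically continuous process with independent increments. Two small corrections. First, your parenthetical claim that the uniqueness step ``never uses that $\mu$ is selfdecomposable'' is not quite right: the ratio $\phi_\mu(s_{i+1}^{H}u)/\phi_\mu(s_i^{H}u)$ pins down the increment law only where $\phi_\mu$ does not vanish, and the relation $\phi_{\Delta}(u)\,\phi_\mu(s_i^{H}u)=\phi_\mu(s_{i+1}^{H}u)$ would leave $\phi_\Delta$ undetermined at any zero of $\phi_\mu$. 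Non-vanishing of $\phi_\mu$ is exactly what selfdecomposability buys you (such laws are infinitely divisible), or what the first half of the theorem forces once $T$ is assumed $H$-SSA, so the input is needed, if only implicitly. Second, the first assertion concerns $T(s)$ for every $s>0$, not only $s=1$; this follows either by running your argument with $T(s)=T(as)+[T(s)-T(as)]$ or by noting that a positive scalar multiple of a selfdecomposable variable is selfdecomposable. With those remarks incorporated, the argument is complete.
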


We restrict the discussion to the case of $H = 1$ for most of the rest of this article, thanks to the following lemma.

\begin{lemma}\label{thm:TimeChangeEasy}
    Suppose $\Tsvar$ is an $H$-SSA non-decreasing process. Then the time change
    \begin{equation}
        \Tilde{T}(s) = T({s ^ {1 / H }}) \qquad (s > 0),
    \end{equation}
    gives a $1$-SSA non-decreasing process $(\Tilde{T}(s), s \ge 0)$ with $range(\tilde T) = range(T)$.
\end{lemma}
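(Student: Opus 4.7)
The plan is to verify the four defining properties of a $1$-SSA non-decreasing process for $\tilde T(s) = T(s^{1/H})$ one by one, treating the deterministic time change $\phi(s) = s^{1/H}$ as a continuous strictly increasing bijection of $\R_+$ onto itself, and then read off the range equality essentially for free.

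First I would dispose of the easy structural facts. Since $\phi$ is strictly increasing and $T$ is non-decreasing, $\tilde T$ is non-decreasing. Since $\phi$ is continuous and $T$ is càdlàg, $\tilde T$ is càdlàg. Since $\phi(0+) = 0$ and $T(0+) = 0$, we get $\tilde T(0+) = 0$. Independent increments transfer from $T$ to $\tilde T$ because increments of $\tilde T$ on disjoint intervals $(s_i, s_{i+1})$ are increments of $T$ on the disjoint intervals $(\phi(s_i), \phi(s_{i+1}))$; stochastic continuity transfers by the same token since $\phi$ is continuous. Thus $\tilde T$ is additive.

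The core step is $1$-self-similarity. For fixed $c > 0$, I would compute directly from the definitions: for each $s > 0$,
\begin{equation}
\tilde T(cs) = T((cs)^{1/H}) = T(c^{1/H} s^{1/H}).
\end{equation}
Applying the $H$-self-similarity \eqref{eqn:hselfsimilar} of $T$ with scale factor $c^{1/H}$ gives
\begin{equation}
(T(c^{1/H} r), r > 0) \ed ((c^{1/H})^H T(r), r > 0) = (c\, T(r), r > 0),
\end{equation}
and this identity in finite-dimensional distributions is preserved when we evaluate along the deterministic family $r = s^{1/H}$ indexed by $s > 0$. Hence $(\tilde T(cs), s > 0) \ed (c\, \tilde T(s), s > 0)$, which is $1$-self-similarity.

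Finally, range equality is immediate: since $\phi : \R_+ \to \R_+$ is a bijection,
\begin{equation}
range(\tilde T) = \{ T(s^{1/H}) : s > 0 \} = \{ T(r) : r > 0 \} = range(T).
\end{equation}
I do not expect a genuine obstacle here; the only minor subtlety is being careful that $H$-self-similarity is an identity of finite-dimensional laws so it can be specialized along any deterministic countable (and then dense) set of time points and extended to the full process via càdlàg regularity, but that is standard.
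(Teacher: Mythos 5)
Your proposal is correct. The paper states this lemma without any proof (it is treated as routine), and your direct verification --- transferring the additive-process properties through the deterministic increasing bijection $s \mapsto s^{1/H}$, deriving $1$-self-similarity by applying \eqref{eqn:hselfsimilar} with scale factor $c^{1/H}$ and evaluating the finite-dimensional identity along the reparametrized times, and reading off the range equality from bijectivity --- is exactly the intended argument; note only that the final ``countable dense set plus c\`adl\`ag regularity'' remark is unnecessary, since equality of finite-dimensional distributions can be specialized directly to any finite set of times $s_i^{1/H}$.
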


We are primarily interested in the case of a $1$-SSA process $T$ that is non-decreasing and with no drift.
Then \eqref{eqn:lk} and \eqref{eqn:levyInt} reduce to the formulas \eqref{eqn:lknonneg} and \eqref{eqn:integr} for the Laplace transform of $T(1)$.

So the \Le density of $T(s)$ at $x >0$ is $k(x / s) / x$. 
As a result, the distribution of $T$ is fully characterized by the {\em key function} $k(x)$, as detailed further in Section \ref{sec:jumps}.

\section{Hold-jump description}\label{sec:jumps}

This section presents the relationship between the \ssarate $\theta < \infty$ and the key function $k(x)$
of a $1$-SSA non-decreasing process with finite \ssarate $\theta$. 
The hold-jump description after the jump over $1$ will then be introduced as a framework to describe $range(T) \cap [1, \infty)$, as required to check whether $range(T)$ is Poisson using Lemma \ref{thm:sipbeta}. 

\subsection{Rate and generic jump} 

\begin{theorem}\label{thm:ktothetaJ}
    Suppose $T := \Tsvar$ is a $1$-SSA non-decreasing process with no drift, finite \ssarate $\theta$ and key function $k(x)$. Then $k(0+) = \theta$ and
    \begin{equation}\label{eqn:sumjump}
        T(s) = \sum_{z \in \Z} S_z J_z 1(S_z \le s), \qquad (s > 0)
    \end{equation}
    where
    \begin{itemize}
        \item the jump times $(S_z, z \in \Z )$ are the points of a scale invariant PPP with rate $\theta$;
    \end{itemize}
    Assuming also that these jump times are listed in an order depending only on their point process \eqref{eqn:jumptimes}, for instance in an increasing order with $S_0$ the time of the first jump after time $s = 1$,
    \begin{itemize}
        \item the {\em normalized jumps} $(J_z, z \in \Z)$ form a sequence of i.i.d. copies of a positive random variable $J$,
        called the {\em generic jump} of $T$, with tail probability
        \begin{equation}\label{eqn:tailprob}
            \P(J > x) = \frac {k(x)} {\theta}, \qquad (x \ge 0) ;
        \end{equation}
    \item the sequence of jump times $(S_z, z \in \Z )$ is independent of the sequence of normalized jumps $(J_z, z \in \Z )$.
    \end{itemize}
\end{theorem}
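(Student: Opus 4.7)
The plan is to exploit the \LI description of the jumps of the additive process $T$ together with $1$-self-similarity to completely pin down the joint structure of jump times and jump sizes.

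Since $T$ is a non-decreasing additive process with no drift, the random countable set of jumps
\begin{equation*}
\Pi = \{ (s, T(s) - T(s-)) : s > 0,\, T(s) > T(s-) \}
\end{equation*}
is a Poisson point process on $\R_+^2$ with some $\sigma$-finite intensity measure $\mu(ds,dx)$, and $T$ is recovered as $T(s) = \sum_{(t,x) \in \Pi,\, t \le s} x$. Starting from the identity $(T(cs))_{s>0} \ed (cT(s))_{s>0}$ in \eqref{eqn:1selfsimilar} and comparing the jump sets of the two sides (a jump of $T(cs)$ at $s$ with size $x$ is a jump of $T$ at $cs$ with size $x$, whereas a jump of $cT(s)$ at $s$ has size $c$ times the corresponding jump of $T$), one sees that $\mu$ is invariant under the diagonal scaling $(s,x) \mapsto (cs, cx)$ for every $c > 0$.

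Next I would change coordinates from $(s, x)$ to $(s, j)$ with $j = x/s$, under which the scaling $(s, x) \mapsto (cs, cx)$ becomes $(s, j) \mapsto (cs, j)$. The pushforward $\widetilde \mu(ds, dj)$ is then $\sigma$-finite and invariant under dilation of $s$ alone. Disintegrating against its $s$-marginal, which is a dilation-invariant Radon measure on $\R_+$ and hence of the form $\alpha\, ds/s$ with $\alpha \in [0,\infty]$, and using the invariance in $s$ to force the conditional kernel in $j$ to be constant in $s$, one gets
\begin{equation*}
\widetilde \mu(ds, dj) = \alpha\, \frac{ds}{s}\, \rho(dj)
\end{equation*}
for some probability measure $\rho$ on $\R_+$. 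By the marking property of Poisson processes, this product form says exactly that the jump times $\{S_z\}$ are a scale invariant PPP with rate $\alpha$, and that for any labeling of these times depending only on the underlying point set the corresponding normalized jumps $J_z = (T(S_z)-T(S_z-))/S_z$ are i.i.d. with law $\rho$ and independent of $(S_z)$.

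Finally I would identify $\alpha$ and $\rho$ from the key function $k$. Since $T(1) = \sum_{(t,x) \in \Pi,\, t \le 1} x$, the \Le measure $\nu_1(dx) = (k(x)/x)\,dx$ of $T(1)$ equals the pushforward of $\1(s \le 1)\, \widetilde \mu(ds, dj)$ under $(s, j) \mapsto x = sj$, so for $a > 0$,
\begin{equation*}
\int_a^\infty \frac{k(x)}{x}\, dx = \alpha \int_0^\infty \rho(dj) \int_0^1 \1(sj > a)\, \frac{ds}{s} = \alpha \int_a^\infty \log(j/a)\,\rho(dj) = \alpha \int_a^\infty \frac{\overline \rho(u)}{u}\, du,
\end{equation*}
where $\overline \rho(u) = \rho((u, \infty))$ and I have used Fubini in the last step. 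Equating integrands gives $k(x) = \alpha\, \overline \rho(x)$, so $\P(J > x) = k(x)/\alpha$ and, letting $x \to 0+$, $\alpha = k(0+) = \theta$, delivering both \eqref{eqn:tailprob} and the equality $k(0+)=\theta$. The hold-jump sum \eqref{eqn:sumjump} is then just the reassembly of $T$ from its jump PPP in $(s,j)$ coordinates. The main technical delicacy is the disintegration step: one must justify the clean product factorization of $\widetilde \mu$, and it is precisely the finite-rate assumption $\theta = k(0+) < \infty$ (together with the \Le integrability \eqref{eqn:integr}) that ensures $\rho$ is a finite, normalizable measure and that the marking interpretation is literally valid.
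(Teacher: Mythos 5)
Your proposal is correct and takes essentially the same route as the paper's proof: both start from the \LI representation of the jump point process, use $1$-self-similarity to put its intensity into the product form $\theta s^{-1}ds \otimes \P(J \in dj)$ in the coordinates $(s, x/s)$, and then identify $\P(J>x)=k(x)/\theta$ by pushing that intensity forward onto the \Le measure of $T(1)$. The only differences are presentational: you spell out the disintegration that the paper compresses into ``thanks again to self-similarity,'' and you match $\rho$ to $k$ via tail integrals rather than the paper's direct change of variables in \eqref{eqn:levyMeasureTransform}.
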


\begin{proof}
    The self-similarity of $T$ ensures that there are no jumps at fixed times.
    Thus, as a non-decreasing additive process, $T(s)$ has the following almost surely unique \LI representation \cite[Theorem 16.3]{kallenberg2021foundations}:
    \begin{equation}\label{eqn:SSAPPPrep}
        T(s) = a_s + \int_0^s \int_0^\infty x \eta(dy~dx), \qquad ( s > 0 ).
    \end{equation}
    where $a_s$ is a non-decreasing function with $a_0 = 0$ and $\eta(dy~dx)$ is a Poisson point process on $(0, \infty) ^2$ satisfying
    \begin{equation}\label{eqn:upppInt}
        \int_0^s \int_0^\infty (x \wedge 1)\E \eta(dy~dx) < \infty. \qquad ( s > 0 )
    \end{equation}
    In fact, $\eta$ characterizes the jump structure of $T$
    \begin{equation}
        \eta(\cdot) = \sum_y 1\{ (s, \Delta_y) \in \cdot \}
    \end{equation}
    where the summation extends over all times $y>0$ with $\Delta_y := T(y) - T({y-}) > 0$.
    Hence 
    \begin{equation} \label{eqn:SSAJumprep}
        T(s) = a_s + \sum_{y} \Delta_y 1(y \le s) \qquad ( s > 0 ).
    \end{equation}
    We call this $\eta$ {\em the underlying Poisson point process} of the non-decreasing SSA process $T$.
Taking into account self-similarity of $T$, it is easy to see that $a_s \equiv a_1 s$. If we further require $T$ to be a pure jump process, then $a_s \equiv 0$. 
Moreover, as we know the jump times are from a scale invariant PPP with rate $\theta$, the representation \eqref{eqn:sumjump} follows immediately from self-similarity. To finish the proof, it remains to show \eqref{eqn:tailprob}.
    
    Thanks again to self-similarity, note that $\{ (S_z, \Delta_{S_z} = S_z J_z), z \in \Z \}$ is an exhaustive listing of points from the underlying Poisson point process $\eta$ on $\R_+^2$ with intensity measure
    \begin{equation}\label{eqn:underPPPintensity}
        \nu(ds ~ dy) := \E \eta(ds ~ dy) = \theta s^{-1} ds ~ \P(sJ \in dy).
    \end{equation}
    
    Hence, for each Borel set $B \subset \R_+$ and $a > 0$,
    \begin{align} 
        \int_0^a \theta s^{-1} \P(sJ \in B) ds &= \int_{y \in \R_+} d F_J(y)\int_{s \in \R_+} \theta s^{-1} 1(sy \in B, s < a) ds\\
        (\text{set } s = x / y) \qquad &= \int_{y \in \R_+} d F_J(y)\int_{x \in \R_+} \theta x^{-1} 1(x \in B, y > x / a) dx \\
        &= \int_{x \in B} \theta x^{-1} dx \int_{y > x/a} d F_J(y)  \\
        \label{eqn:levyMeasureTransform} &= \int_{x \in B} \frac{\theta ~ \P(J > x / a)} x dx.
    \end{align}
    
    But the \Le measure of $T(1)$ is
    \begin{equation}
        \frac{k(x)} x dx = \int_{s \in (0, 1]} \theta s^{-1} \P(sJ \in dx) ds,
    \end{equation}
    which implies \eqref{eqn:tailprob} by setting $a = 1$ in \eqref{eqn:levyMeasureTransform}.

\end{proof}

To illustrate Theorem \ref{thm:ktothetaJ}, we give two examples.

\begin{example}\label{eg:gammakey}
    {\em Fix $\theta, \lambda > 0$. A $1$-SSA gamma process $\Tsvar$ with $T(1) \sim$ gamma$(\theta, \lambda)$ has exponential generic jump $J \sim$ exp$(\lambda)$ and finite \ssarate $\theta$, thanks to the key function given in \eqref{gammakey}.}
\end{example}

\begin{example}
    {\em This example is implicit in the discussion of the least concave majorant of a one-dimensional Brownian motion by Groeneboom \cite{groeneboom1983concave} and Pitman and Ross \cite{pitman2012greatest}.
    Following the notations in \cite{groeneboom1983concave}, let $\omega$ denote a standard Brownian motion starting at the origin.
    For $a>0$, let $\sigma(a)$ be the last time that maximum of $\omega(t) - at$ is attained, and $\tau(a) := \sigma(1/a), a > 0$ and $\tau(0) = 0$.
    The process $\tau$ is $2$-SSA non-decreasing process with no drift.
    By Lemma \ref{thm:TimeChangeEasy} it can be reduced to fit our $1$-SSA framework through the time change
    \begin{equation}
        T(a) := \tau(\sqrt a) = \sigma(1 / \sqrt a).
    \end{equation}
    
    Then $T$ is a $1$-SSA process, whose range is the random set of times of vertices of the least concave majorant of Brownian motion.
    The underlying Poisson point process driving $T$ has intensity measure $\nu$ which can be read from Groeneboom's description of the process $\tau$:
    \begin{equation} \label{eqn:Poisson_density_concave_majorant}
        \nu(ds~dx) = \frac 1 {s \sqrt{x}} \phi \left(\sqrt{\frac {x} {s}} \right) \frac 12 s^{-\hf} ds dx = |k'| \left(\frac xs \right) \frac {ds \, dx} {s^2},
    \end{equation}
    with $\phi(\cdot)$ the standard normal probability density function, and
    \begin{equation}
    |k'|(x) = - \frac{d k(x)}{x} = \frac 12 \cdot  \frac 1 {\sqrt{2\pi}} x^{-\hf} e^{-\hf x} \qquad (x > 0),
    \end{equation}
    the absolute first derivative of the key function $k(x) = \P( J > x) /2$
    for $J \sim$ gamma($\hf, \hf$), indicating that $T$ has \ssarate $\hf$. Therefore, the known results that
    \begin{itemize}
        \item $range(T)$ is a scale invariant point process on $\R_+$ with rate $\hf$ \\
            \cite[Corollary 9]{pitman2012greatest}, and
        \item the set of jump times of $T$ is a scale invariant PPP on $\R_+$ with rate $\hf$ \\
            \cite[Theorem 2.1]{groeneboom1983concave}
    \end{itemize}
    are the instances of parts I and II of Theorem \ref{thm:ssarate} for this particular $1$-SSA process $T$.
    Because the key function $k(x)$ of $T(1)$ is not just an exponential,
    the ``only if'' part of Theorem \ref{thm:GammatoPPP} shows that range $(T)$,
    the set of times of vertices of the least concave majorant of Brownian motion, is
    not a Poisson point process.  See Pitman and Ouaki \cite{ouaki2021markovian} for a deeper study of Markovian structure in the concave majorant of Brownian motion.}
\end{example}

Theorem \ref{thm:ktothetaJ} also yields the following {\em hold-jump description} of a $1$-SSA non-decreasing process. 

\begin{corollary}\label{thm:HoldJumpShort}
For each fixed time $s>0$, and $t \ge 0$, the future of $T$ after time $s$, conditional on $T(s) = t$, can be constructed by
\begin{itemize}
    \item {\em `Hold'} - at level $t$ till the random time $H_s \ed s \beta ^{-1}$ for $\beta \sim$ beta$(\theta, 1)$, i.e.
    \begin{equation}\label{eqn:hold}
        \frac{d}{dx} \P(H_s \le x) = \theta x^{-\theta - 1} s^{\theta} 1(x > s);
    \end{equation}
    \item {\em `Jump'} - up by $H_s J$, where $J$ is the generic jump and is independent of $H_s$, i.e.
    \begin{equation}\label{eqn:jump}
        \P(T(H_s) - T(H_s-) > y \mid H_s = s') = \P(s' J > Y) = \P(J > Y / s');
    \end{equation}
    \item then repeat, conditioning on $T(s') = t'$ for $s' = H_s, t' = t + H_s J$.
\end{itemize}

By setting a fixed starting time $S_1 = s$, \eqref{eqn:hold} and \eqref{eqn:jump} specify a homogeneous pure jump-type Markov process $((S_n, T_n), n \in \Z_+)$ with state space $\R_+^2$, whose {\em entrance law} is given by $T_1 = T(s) \ed s T(1)$.
Moreover, apart from this entrance law, the same description applies with the fixed time $s$ replaced by any stopping time $\sigma$ relative to the filtration of $T$, on the event $(\sigma > 0 )$.
\end{corollary}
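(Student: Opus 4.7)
The plan is to derive the corollary directly from Theorem \ref{thm:ktothetaJ}, which already supplies the jump structure of $T$, together with Lemma \ref{thm:sipbeta}, which identifies beta-distributed ratios of consecutive points of a scale invariant PPP. Since $T$ has independent increments and is stochastically continuous, the fixed time $s$ is almost surely not a jump time, and the PPP of jump times restricted to $(s,\infty)$ is independent of $\mathcal{F}_s := \sigma(T(u), u \le s)$; conditioning on $T(s) = t$ only fixes the current level and does not affect the future jump-time point process.

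To identify the hold distribution, let $H_s$ denote the first jump time of $T$ strictly after $s$. By Theorem \ref{thm:ktothetaJ} the jump times form a scale invariant PPP of rate $\theta$, so Lemma \ref{thm:sipbeta} gives $s/H_s \sim$ beta$(\theta,1)$, i.e.\ $H_s \ed s\beta^{-1}$ for $\beta \sim$ beta$(\theta,1)$, which is exactly \eqref{eqn:hold}; and $T$ is constant at $t$ on $[s, H_s)$ because $s$ is not a jump time. For the jump at $H_s$, Theorem \ref{thm:ktothetaJ} says the jump at $S_z$ equals $S_z J_z$ where the normalized jumps $(J_z)$ are i.i.d.\ with tail $k(x)/\theta$ and independent of the jump-time PPP, so conditional on $H_s = s'$ the jump size is $s' J$ for an independent generic jump $J$, giving \eqref{eqn:jump}. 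The remaining jump-time PPP on $(H_s, \infty)$ is again scale invariant of rate $\theta$ and independent of the past, and the remaining normalized jumps are i.i.d.\ and independent, so the two steps iterate to yield the homogeneous pure-jump Markov chain $((S_n, T_n))$. The entrance law $T_1 \ed s T(1)$ is the $1$-self-similarity relation \eqref{eqn:1selfsimilar}.

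The extension from a fixed time $s$ to a stopping time $\sigma$ relative to the filtration of $T$ on $\{\sigma > 0\}$ is the main obstacle. The cleanest route is the strong Markov property: since $T$ is additive, stochastically continuous and càdlàg, the post-$\sigma$ process is independent of $\mathcal{F}_\sigma$, and by scale invariance its jump times still form a scale invariant PPP on $(\sigma, \infty)$ with rate $\theta$ and i.i.d.\ normalized jumps independent of those jump times; the arguments above then carry over verbatim, except that the entrance level $T(\sigma)$ is no longer $\sigma T(1)$ in distribution. The usual approximation — discretizing $\sigma$ from above by dyadic stopping times, applying the fixed-time result, and passing to the limit using càdlàg paths and stochastic continuity — makes this rigorous.
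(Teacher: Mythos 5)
Your proposal is correct and takes essentially the route the paper intends: the paper states this corollary without proof as an immediate consequence of Theorem \ref{thm:ktothetaJ}, and your derivation --- the beta$(\theta,1)$ hold ratio from Lemma \ref{thm:sipbeta} applied to the jump-time PPP, the jump size $s'J$ from the i.i.d.\ normalized jumps independent of the jump times, and the discretization argument for stopping times --- is exactly the intended filling-in of the details. One phrasing quibble only: since $T$ is time-inhomogeneous, the post-$\sigma$ increments are conditionally independent of $\mathcal{F}_\sigma$ given $\sigma$ rather than unconditionally independent of $\mathcal{F}_\sigma$, which your dyadic approximation step already handles correctly.
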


The following corollary of Theorem \ref{thm:ktothetaJ} gives all possible distributions of generic jumps.

\begin{corollary}\label{thm:logfinite}
    For each $1$-SSA non-decreasing process $T$ with finite \ssarate $\theta$, its generic jump $J$ satisfies
    \begin{equation}
\label{eqn:logInt}
        \E \log^+ (J) < \infty.
    \end{equation}
    Conversely, for each $\theta < \infty$ and each positive random variable $J$ satisfying \eqref{eqn:logInt}, there exists a unique $1$-SSA process $T$ with no drift, \ssarate $\theta$ and generic jump $J$.
\end{corollary}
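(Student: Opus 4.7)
The plan is to translate both statements into properties of the key function $k(x) := \theta \P(J > x)$ identified in Theorem \ref{thm:ktothetaJ}, and then invoke Sato's existence and uniqueness theorem \ref{thm:satoSD}. The bridge between the tail of $J$ and the tail of $k$ is the Fubini computation
\begin{equation*}
    \int_1^\infty \frac{\P(J > x)}{x} \, dx = \int_0^\infty \P(J > e^y) \, dy = \E \log^+ J,
\end{equation*}
obtained via the substitution $x = e^y$. This is the only nontrivial part of the integrability condition \eqref{eqn:integr}, since $\int_0^1 k(x) \, dx \le \theta$ is automatic whenever $k(0+) = \theta < \infty$.

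For the forward direction, suppose $T$ is a $1$-SSA non-decreasing process with no drift and finite \ssarate $\theta$. By Theorem \ref{thm:ktothetaJ}, $k(x) = \theta \P(J > x)$, and the integrability condition \eqref{eqn:integr} on $k$ forces $\int_1^\infty k(x)/x \, dx < \infty$. The identity above then yields $\E \log^+ J < \infty$.

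For the converse, given $\theta < \infty$ and a positive random variable $J$ with $\E \log^+ J < \infty$, set $k(x) := \theta \P(J > x)$ for $x > 0$. Then $k$ is right-continuous, non-increasing on $\R_+$, with $k(0+) = \theta$, and the two pieces of \eqref{eqn:integr} are $\int_0^1 k(x) \, dx \le \theta$ and $\int_1^\infty k(x)/x \, dx = \theta \E \log^+ J$, both finite. Let $\mu$ be the infinitely divisible probability distribution on $[0,\infty)$ with no drift whose Laplace transform is given by \eqref{eqn:lknonneg} with this $k$; since $k$ is non-increasing, $\mu$ is selfdecomposable. Theorem \ref{thm:satoSD} produces a $1$-SSA process $T$ with $T(1) \sim \mu$, unique in finite-dimensional distributions. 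Since the \Le measure of $T(1)$ is concentrated on $(0,\infty)$ with no drift and no Gaussian component, the \LI representation \eqref{eqn:SSAPPPrep} for $T$ has $a_s \equiv 0$ and only positive jumps, so $T$ is non-decreasing with no drift. Finally, Theorem \ref{thm:ktothetaJ} applied to this $T$ returns \ssarate $k(0+) = \theta$ and generic jump with tail $k(x)/\theta = \P(J > x)$, as required. Uniqueness of $T$ given $(\theta, J)$ follows because $(\theta, J)$ determines $k$, hence the no-drift selfdecomposable $\mu$, hence $T$ by Theorem \ref{thm:satoSD}.

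The only mildly nontrivial step is checking that the Sato process attached to $\mu$ really is non-decreasing with no drift, but this is immediate from the \LI decomposition once one notes that $\mu$ was built with \Le measure concentrated on $(0,\infty)$ and no linear or Gaussian term, and that these structural features of $T(1)$ transfer to the whole process $T$ via self-similarity.
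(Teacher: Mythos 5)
Your proposal is correct and follows essentially the same route as the paper: both reduce the statement to the integrability condition \eqref{eqn:integr} on the key function $k(x) = \theta\,\P(J>x)$ from Theorem \ref{thm:ktothetaJ}, and both hinge on the substitution $\int_1^\infty \P(J>x)\,x^{-1}dx = \E\log^+ J$. You are somewhat more explicit than the paper in the converse direction, spelling out the appeal to Sato's Theorem \ref{thm:satoSD} and the check that the resulting process is non-decreasing with no drift, which the paper leaves implicit; this is a welcome clarification but not a different argument.
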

\begin{proof} 
    As per \eqref{eqn:tailprob}, each right continuous, non-decreasing function $k(x)$ uniquely determines $\theta$ and $J$, and vice versa. So the only thing to check is the integrable condition of the \Le density $k(x)/x$ in \eqref{eqn:integr},
    \begin{equation}
        \int_0^\infty (x \wedge 1) \frac {k(x)} x dx = \theta \left[ \int_0^1 \P(J > x) dx + \int_1^\infty \P(J > x) x^{-1} dx \right] < \infty,
    \end{equation}
    where the integral is restricted on $\R_+$ since $T$ is non-decreasing. The first term is finite no matter what distribution of $J$ is, while
\begin{align}
        \int_1^\infty \P(J > x) x^{-1} dx &= \int_1^\infty \P(\log J > \log x) d (\log x)\\
        &= \int_0^\infty \P(\log J > r) dr = \E \log^+ J,
\end{align}
    which finishes the proof.
\end{proof}

The convergence condition \eqref{eqn:logInt} appeared first in Vervaat \cite[Theorem 1.6b]{vervaat1979stochastic}, in the discussion of stochastic difference equations. 
See also Wolfe \cite[Theorem 1]{wolfe1982continuous}.

\subsection{The jump over 1}\label{subsec:jumpover1}

In considering whether or not $range(T)$ is a Poisson process, Lemma \ref{thm:sipbeta} shows it is sufficient to examine the ratios of adjacent points in $range(T) \cap (1, \infty)$ only.
The hold-jump description provides us with sufficient information to calculate the ratios as long as we know the joint distribution of where 
and when the jump of $T$ over $1$ is made. That is given by the following lemma.

\begin{lemma}\label{thm:Jumpover1lemma}
    Consider the jump over level $t$ of a 1-SSA non-decreasing process $\Tsvar$ with no drift and finite \ssarate $\theta$.
    Suppose the jump is made at time $S(t)$ from $G_t := T(S(t-)) \le t$ to $D_t := T(S(t)) > t$.
    Then
    \begin{equation}\label{eqn:jumpover1density}
        \P (S(t) \in ds, G_t \in dg, D_t - G_t > y) = \frac {\theta ds} s \P (T(s) \in dg) \P (sJ > y), \quad \forall 0 \le g \le t \le g+y.
    \end{equation}
    
    In particular, for $t = 1$ and $y = 1-g$,
    \begin{equation}\label{eqn:s1g1density}
        \P (S(1) \in ds, G_1 \in dg) = \frac {\theta ds} s \P(T(s) \in dg) \P(sJ > 1-g), \quad \forall 0 \le g \le 1.
    \end{equation}
\end{lemma}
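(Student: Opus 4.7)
The plan is to read the joint law from the underlying Poisson point process $\eta$ of jumps of $T$ introduced in the proof of Theorem \ref{thm:ktothetaJ}, whose intensity factorizes as $\nu(ds\,dx) = \theta s^{-1} ds\, \P(sJ \in dx)$. Translating the event into the PPP picture, the jump over level $t$ is the unique point $(s,x) \in \eta$ satisfying $T(s-) \le t < T(s-) + x$; in that case $S(t) = s$, $G_t = T(s-)$ and $D_t - G_t = x$. Because $T$ is non-decreasing, the single inequality $T(s-) \le t$ forces every earlier jump to remain below $t$; moreover, under the standing constraint $g + y \ge t$ in the statement, any jump of size $x > y$ from level $g \le t$ automatically overshoots $t$. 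So the crossing condition reduces to $\{T(s-) = g \le t,\ x > y\}$.

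Next I would apply the refined Campbell (Slivnyak--Mecke) formula to $\eta$: for any nonnegative measurable functional $F$,
\[
\E \sum_{(s,x)\in \eta} F\bigl((s,x),\eta\bigr) = \iint \E F\bigl((s,x),\eta + \delta_{(s,x)}\bigr)\, \nu(ds\,dx).
\]
Choose $F\bigl((s,x),\eta\bigr) = f(s, T(s-), x)\,\1\{T(s-) \le t\}\,\1\{x > y\}$ for a bounded test function $f$. The key structural observation is that $T(s-) = \sum_{u < s} \Delta_u$ depends only on points of $\eta$ with first coordinate strictly less than $s$; consequently, the Palm-inserted point $(s,x)$ at time $s$ leaves $T(s-)$ unchanged. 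Combined with stochastic continuity of $T$ at every fixed $s$, which gives $\P(T(s-) \in dg) = \P(T(s) \in dg)$, and with the product form of $\nu$, the Mecke integral collapses to
\[
\E f(S(t), G_t, D_t - G_t) = \iiint_{\{g \le t,\, x > y\}} f(s,g,x)\, \P(T(s) \in dg)\, \theta s^{-1} ds\, \P(sJ \in dx),
\]
valid whenever $0 \le g \le t \le g+y$. Reading off the joint distribution yields \eqref{eqn:jumpover1density}, and the specialization $t = 1$, $y = 1-g$ gives \eqref{eqn:s1g1density}.

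The main technical point is verifying that the Palm-added point $(s,x)$ at time $s$ does not perturb the pre-jump value $T(s-)$; once that is clear, the desired product form is just the factorization of the PPP intensity together with the independence of disjoint regions of $\eta$. An alternative route is a direct discretization argument: using independent increments, decompose the event into ``no prior crossing of $t$'' (i.e. $T(s-) \le t$) on $[0,s)$ and a jump of size $>y$ in the infinitesimal interval $[s, s+ds)$; the independence of these two pieces and the Poisson intensity formula $\theta s^{-1}\P(sJ \in dx)\,ds$ recover the same product, with the Mecke formula serving only to make this rigorous.
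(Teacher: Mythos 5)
Your proposal is correct and follows essentially the same route as the paper: both read the answer off the underlying Poisson point process $\eta$ with intensity $\nu(ds\,dx)=\theta s^{-1}ds\,\P(sJ\in dx)$, factoring the event via independent increments into the pre-jump value $T(s-)\in dg$ (with $\P(T(s-)\in dg)=\P(T(s)\in dg)$ by absence of fixed-time jumps) and a jump of size exceeding $y$ at time $s$. The only difference is presentational: the paper performs the infinitesimal computation $\P(T(s-)\in dg)\,\P(T(s+ds)-T(s-)>y)$ directly, whereas you package the same decomposition through the Slivnyak--Mecke formula, which is a slightly more rigorous rendering of the identical argument; your observation that the constraint $g+y\ge t$ is exactly what reduces the crossing event to $\{T(s-)=g\le t,\ x>y\}$ is a point the paper leaves implicit.
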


\begin{proof} 
Since $T$ has independent increments and the set of jump times is not dense,
    \begin{align}
        \P (S(t) \in ds, G_t \in dg &, D_t - G_t > y) = \P(T(s-) \in dg) \P( T(s + ds) - T(s-) > y) \notag\\
        &= \P(T(s-) \in dg) \int_{x \in (y, \infty)} \nu (ds, dx) \notag\\
        &=  \P(T(s) \in dg) \int_{x \in (y, \infty)} \frac {\theta ds} {s} F_J \left( \frac {dx} s \right) \notag\\
        &= \frac {\theta ds} s \P (T(s) \in dg) \P (sJ > y), \quad \forall 0 < g \le t \le g+y.
    \end{align}
\end{proof}

Now we give the following theorem as the {\em hold-jump description after the jump over $1$}.

\begin{theorem}\label{thm:HoldJump}
    Suppose $T := \Tsvar$ is a $1$-SSA non-decreasing process with no drift and finite \ssarate $\theta$. Let $J$ denote its generic jump. Let $S_1$ be the time when $T$ jumps over $1$
    \begin{equation}
        T(S_1-) \le 1 < T(S_1),
    \end{equation}
    and $S_1 < S_2 < \cdots$ be the times of successive jumps $s$ of $T$ with $T(s)> T(s-) $ and $T(s) > 1$, and define
    \begin{equation}
        T_0 := T(S_1-) \le 1 < T_1 := T(S_1) < T_2:= T(S_2) < \cdots
    \end{equation}
    so that $T_1, T_2, \ldots$ is the increasing sequence of values greater than $1$ which are attained by $T$ on the successive intervals $[S_1, S_2), [S_2, S_3), \ldots$.
Then the joint distribution of the two sequences 
    $(S_n, n \ge 1)$ and  $(T_n, n \ge 0)$ is determined as follows:
    \begin{align}
        \label{eqn:s1t0}\P(S_1 \in ds, T_0 \in dt) & = \frac {\theta ds} s \P(T(s) \in dt) \P(sJ > 1-t),\\
        \label{eqn:srec} S_n & = S_1 \left(\prod_{i=1}^{n-1} \beta_i \right) ^ {-1} \qquad ( n = 2,3, \ldots ),\\
        \label{eqn:trec} T_n & =  T_{n-1} + S_n J_n \qquad ( n = 1,2, \ldots ),
    \end{align}
    where $\beta_1, \beta_2, \ldots$ and $J_1, J_2, \ldots$ are
    independent random variables, with 
    \begin{itemize}
    \item $\beta_1, \beta_2, \ldots$ all with the beta$(\theta,1)$ distribution \eqref{betatheta};
    \item $J_1, J_2, \ldots$ identically distributed copies of the generic jump $J$.
    \end{itemize}
\end{theorem}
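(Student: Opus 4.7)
The plan is to derive the joint law of the sequences $(S_n)_{n \ge 1}$ and $(T_n)_{n \ge 0}$ in two stages. First, I identify the marginal law of the initial pair $(S_1, T_0)$ from Lemma \ref{thm:Jumpover1lemma}. Second, I apply the hold-jump description of Corollary \ref{thm:HoldJumpShort} iteratively at the stopping times $S_n$ to generate all subsequent $(S_{n+1}, T_{n+1})$ via the strong Markov property.

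For equation \eqref{eqn:s1t0}, observe that in the notation of Lemma \ref{thm:Jumpover1lemma}, $S_1 = S(1)$ is the time of the first jump over level $1$ and $T_0 = G_1 = T(S_1-)$, so \eqref{eqn:s1t0} is just a renaming of the marginal identity \eqref{eqn:s1g1density}, with no additional work required.

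For the recursions \eqref{eqn:srec} and \eqref{eqn:trec}, I apply Corollary \ref{thm:HoldJumpShort} at the stopping time $\sigma = S_n$ for each $n \ge 1$, with entrance state $T(S_n) = T_n$. Since $S_n > 0$ almost surely, the hold-jump description applies at $\sigma$, and it tells us that the next jump of $T$ occurs at $S_{n+1} = H_{S_n} = S_n \beta_n^{-1}$ with $\beta_n \sim \mathrm{beta}(\theta,1)$ independent of the history $\mathcal{F}_{S_n}$, after which $T$ jumps from $T_n$ to $T_{n+1} = T_n + S_{n+1} J_{n+1}$ with $J_{n+1}$ an independent copy of the generic jump $J$ (and independent of $\beta_n$, by the independence of hold and jump in Corollary \ref{thm:HoldJumpShort}). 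Unrolling this from $n = 1$ upward produces $S_n = S_1 \prod_{i=1}^{n-1} \beta_i^{-1}$ and $T_n = T_{n-1} + S_n J_n$ for $n \ge 2$, together with the independence of $(\beta_i)_{i \ge 1}$ and $(J_i)_{i \ge 2}$ from each other and from $\mathcal{F}_{S_1}$. The remaining variable $J_1 := (T_1 - T_0)/S_1$ is then read off from the trivariate density $\P(S_1 \in ds, T_0 \in dg, T_1 - T_0 \in dy)$ supplied by Lemma \ref{thm:Jumpover1lemma}; since $J_1$ is $\mathcal{F}_{S_1}$-measurable, it is automatically independent of the post-$S_1$ sequences.

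The main obstacle is handling the jump at the initial index $n = 1$ consistently with the unconditional iid structure asserted for the later jumps. Unlike the jumps at $S_n$ for $n \ge 2$, which arise from unconditioned applications of the hold-jump description at a stopping time, the jump at $S_1$ is distinguished by the selection event $T(S_1) > 1$, so its conditional size-distribution given $(S_1, T_0) = (s,g)$ is that of $J$ restricted to $\{sJ > 1 - g\}$. The clean factorization of the joint law into the marginal \eqref{eqn:s1t0} and an iid recursion is possible because the size-biasing factor $\P(sJ > 1-g)$ already appears in \eqref{eqn:s1t0}; one has to check carefully, by comparing the joint density of $(S_1, T_0, J_1)$ coming from Lemma \ref{thm:Jumpover1lemma} with the one implied by the stated iid recursion, that this absorption reproduces the correct joint law.
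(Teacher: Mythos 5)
Your proposal is correct and follows essentially the same route as the paper's own (very terse) proof: identify \eqref{eqn:s1t0} as a renaming of \eqref{eqn:s1g1density} from Lemma \ref{thm:Jumpover1lemma}, and obtain \eqref{eqn:srec}--\eqref{eqn:trec} by iterating the hold-jump description of Corollary \ref{thm:HoldJumpShort} at the successive jump times. Your added care about the size-biased law of $J_1$ under the selection event $T(S_1)>1$ is a genuine refinement of a point the paper glosses over, but it does not change the argument's structure.
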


\begin{proof}
    Formula \eqref{eqn:s1t0} is just \eqref{eqn:s1g1density} with different notation, while \eqref{eqn:srec} and \eqref{eqn:trec} are due to the hold-jump description presented in Corollary \ref{thm:HoldJumpShort}.
\end{proof}

\subsection{Proof of Theorem \ref{thm:ssarate}}

Suppose that $T$ is a non-decreasing $1$-SSA process with no drift and finite \ssarate $\theta$.
Then Part (II) holds by definition, while (III) is observed by setting $a = \infty$ in \eqref{eqn:levyMeasureTransform}.
To see (I), set $g = t$ and $y = 0$ in \eqref{eqn:jumpover1density} of Lemma \ref{thm:Jumpover1lemma}.
Suppose the rate of $range(T)$ is $\alpha$, then for each Borel set $B \subset \R_+$,
    \begin{align}
		\int_B \frac {\alpha dt} t &= \int_{s \in (0, \infty)} \int_{t \in B} \P(S(1) \in ds, G_1 \in dt, D_1 - G_1 > 0) \notag\\
		&= \int_{s \in (0, \infty)} \frac {\theta ds} s \int_{t \in B}  \P(T(s) \in dt) \P(sJ > 0) \notag\\
		&= \int_{s \in (0, \infty)} \frac {\theta ds} s \int_{t \in s^{-1}B} \P(T(1) \in dt) \notag\\
		&= \int_{s \in (0, \infty)} \int_{t \in s^{-1}B} \frac {\theta ds} s d F_{T(1)}(t) \notag\\
		&= \int_{t \in (0, \infty)} \int_{s \in t^{-1}B} \frac {\theta ds} s d F_{T(1)}(t) \notag\\
		(\text{set } r = st) &= \int_{t \in (0, \infty)} \int_{r \in B} \frac {\theta dr} r d F_{T(1)}(t) = \int_{r \in B} \frac {\theta dr} r,
	\end{align}
which shows $\alpha = \theta$.

\section{Proof that the range of an SSA gamma process is a PPP}
\label{sec:proof1}

This is the ``if'' part of Theorem \ref{thm:GammatoPPP}.
We offer two different proofs.

\subsection{First proof - through the hold-jump description after the jump over 1}

This argument shows that the description of $range(T) \cap (1, \infty)$ implied by Theorem \ref{thm:HoldJump} matches that required by Lemma \ref{thm:sipbeta}. 
We follow the notation of Theorem \ref{thm:HoldJump}, which contains a complete description of the joint distribution of the first arrival times and levels $(S_n, T_n)$ at all levels $t >1$ that are ever attained by the path of $T$.
By scaling, it is enough to consider the case $\lambda = 1$.
From Example \ref{eg:gammakey}, the generic jump is exp$(1)$ distributed: $\P(J >x) = e^{-x}$ for $x > 0$, so \eqref{eqn:s1t0} becomes
\begin{equation}
    \P(S_1 \in ds, T_0 \in dt) = \frac {\theta ds} s \frac 1{s^\theta \Gamma(\theta)} t^{\theta -1} e^{-\frac ts} dt ~ e ^ {-(\frac {1-t} s)} = \frac {s^{-1-\theta} e ^{- \frac 1s} ds}{\Gamma(\theta)} \theta t^{\theta - 1} dt.
\end{equation}
That means $T_0$ and $S_1$ are independent, with $T_0$ distributed beta$(\theta, 1)$ and $S_1$ distributed as inverse gamma$(\theta, 1)$, meaning $S_1^{-1}$ distributed as the gamma$(\theta, 1)$.

Hence by the memoryless property of exponential random variables, we may assume $T_0 = 1$ without hurting the joint distribution of $(S_n, n \ge 1)$ and $(T_n, n \ge 1)$. The joint distribution is now re-written as follows:
\begin{align}
    \label{gammasrec}S_n & = \left(\gamma_\theta \prod_{i=1}^{n-1} \beta_i \right) ^ {-1} \qquad ( n = 1,2, \ldots ) \\
    \label{gammatrec}T_n & =  T_{n-1} + S_n \eps_n \qquad ( n = 1,2, \ldots ) 
\end{align}
where $\gamma_\theta, \beta_1, \beta_2, \ldots$ and $\eps_1, \eps_2, \ldots$ are independent random variables, with 
\begin{itemize}
    \item $\gamma_\theta$ assigned the gamma$(\theta, 1)$ distribution;
    \item $\beta_1, \beta_2, \ldots$ all with the beta$(\theta,1)$ distribution;
    \item $\eps_1, \eps_2, \ldots$ all with the exp$(1)$ distribution.
\end{itemize}

All the ingredients needed for calculating $range(T) \cap (1, \infty)$ are in view. But the description of the levels $T_n$ is tangled up with the description of the times $S_n$ in such a way that it is not immediately obvious why the sequence of ratios $T_{n-1}/T_n$ is also a sequence of i.i.d. copies of $beta(\theta, 1)$.
However, the argument is completed by the following lemma.
 
\begin{lemma}
    Suppose that random variables $S_1:= 1/\gamma_\theta$ and $S_{n+1}:= S_n/\beta_n$ for $n \ge 1$ are defined by the recursion \eqref{gammasrec}, along with $T_0:= 1 < T_1 < T_2 < \cdots$ by \eqref{gammatrec}, from independent random variables $\gamma_\theta$, $\beta_i$ and $\eps_i$ as above.
    Then for each $n = 1,2, \ldots$, the $n+1$ ratios
    \begin{equation}\label{nrats}
        \frac{T_0}{T_1}, \ldots, \frac{T_{n-1}}{T_n}, \frac{T_n}{S_n}
    \end{equation}
    are independent, with the first $n$ consecutive $T$-ratios all distributed according to the common beta$(\theta,1)$ distribution of all the $\beta_i$, and with the last of the $n+1$ ratios
    \begin{equation}\label{tsrat}
        \frac{T_n}{S_n} \ed \gamma_{\theta +1},
    \end{equation}
    the gamma$(\theta+1, 1)$ distribution.
\end{lemma}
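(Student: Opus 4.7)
The plan is to proceed by induction on $n$, driven by two classical identities of beta--gamma algebra: (i) if $X \sim \mathrm{gamma}(a,1)$ and $Y \sim \mathrm{gamma}(b,1)$ are independent, then $X/(X+Y) \sim \mathrm{beta}(a,b)$ is independent of $X+Y \sim \mathrm{gamma}(a+b,1)$; and (ii) the reverse of (i), namely that if $B \sim \mathrm{beta}(a,b)$ and $\Gamma \sim \mathrm{gamma}(a+b,1)$ are independent, then $B\Gamma \sim \mathrm{gamma}(a,1)$ and $(1-B)\Gamma \sim \mathrm{gamma}(b,1)$ are independent. Both identities are instances of the same bijection between independent pairs $(\mathrm{gamma}(a,1),\mathrm{gamma}(b,1))$ and $(\mathrm{beta}(a,b),\mathrm{gamma}(a+b,1))$.

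For the base case $n=1$, a direct computation from \eqref{gammasrec} and \eqref{gammatrec} gives $T_1 = 1 + S_1 \eps_1 = 1 + \eps_1/\gamma_\theta$, so $T_0/T_1 = \gamma_\theta/(\gamma_\theta+\eps_1)$ and $T_1/S_1 = \gamma_\theta + \eps_1$. Since $\gamma_\theta \sim \mathrm{gamma}(\theta,1)$ and $\eps_1 \sim \mathrm{gamma}(1,1)$ are independent, identity (i) with $a=\theta$, $b=1$ yields both marginal distributions and their mutual independence at once.

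For the inductive step, set $W_n := T_n/S_n$. The recursions \eqref{gammasrec} and \eqref{gammatrec} give
\begin{equation*}
\frac{T_{n+1}}{S_{n+1}} = \beta_n W_n + \eps_{n+1}, \qquad \frac{T_n}{T_{n+1}} = \frac{\beta_n W_n}{\beta_n W_n + \eps_{n+1}},
\end{equation*}
so the two new ratios are functions of $W_n, \beta_n, \eps_{n+1}$. By the induction hypothesis $W_n \sim \mathrm{gamma}(\theta+1,1)$ is independent of the earlier ratios $T_0/T_1, \ldots, T_{n-1}/T_n$, and $\beta_n, \eps_{n+1}$ are fresh independent variables; hence the triple $(W_n, \beta_n, \eps_{n+1})$ is independent of those earlier ratios. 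Apply identity (ii) to $(\beta_n, W_n)$ with $a=\theta$, $b=1$ to obtain $\beta_n W_n \sim \mathrm{gamma}(\theta,1)$, which remains independent of $\eps_{n+1}$; then apply identity (i) to the pair $(\beta_n W_n, \eps_{n+1})$ to deliver the claimed distributions for $T_n/T_{n+1}$ and $T_{n+1}/S_{n+1}$ together with their mutual independence. Combined with independence from the earlier ratios, this closes the induction.

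The main obstacle is spotting the role of identity (ii): the factor $\beta_n$ in the recursion for $S_{n+1}$ is precisely what converts the gamma$(\theta+1,1)$ variable $W_n$ inherited from the previous step into an independent gamma$(\theta,1)$ variable, which is exactly the ingredient needed to reset the induction and produce a new beta$(\theta,1)$ ratio while still feeding a gamma$(\theta+1,1)$ into the trailing $T/S$ slot. Once this rebalancing is recognized, the remainder is a clean bookkeeping of independent components.
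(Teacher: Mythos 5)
Your proof is correct and follows essentially the same route as the paper's: the identical base case via beta--gamma algebra, and the identical inductive step in which multiplying $T_n/S_n \ed \gamma_{\theta+1}$ by the fresh $\beta_n$ produces an independent $\gamma_\theta$ variable that is then recombined with $\eps_{n+1}$. No gaps; the bookkeeping of joint independence with the earlier ratios is handled the same way as in the paper.
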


\begin{proof}

For $n = 1$, with $T_0:= 1$
\begin{equation}
    T_1 = 1  + \frac{ \eps_1 }{\gamma_\theta}  = \frac{ \gamma_\theta + \eps_1 } { \gamma_\theta}
\end{equation}
and hence
\begin{equation}
    \frac{T_0}{T_1} = \frac{ 1 } { T_1 } = \frac{ \gamma_\theta } { \gamma_\theta + \eps_1 } \ed \beta_{\theta,1 }
\end{equation}
and this variable $T_0/T_1$ is independent of
\begin{equation}
    \frac{T_1}{S_1} := T_1 \gamma_\theta  = \gamma_\theta + \eps_1  \ed \gamma_{\theta + 1}
\end{equation}
by the beta-gamma algebra mentioned in Lukacs \cite{lukacs1955characterization}.
The case of general $n = 1, 2, 3, \ldots$ now follows by induction on $n$, starting from this base case $n = 1$.
Multiply the recursion \eqref{gammatrec} by $1/T_{n+1} = \beta_n/T_n$ to see that
\begin{equation}\label{tnsnrat}
    \frac{T_{n+1} }{S_{n+1} } = \frac{ T_n}{S_n} \, \beta_n + \eps_{n+1} \ed \gamma_\theta + \eps_1 \ed \gamma_{\theta + 1}
\end{equation}
because the gamma$(\theta + 1)$ distribution of $T_n/S_n$ and the independence of this variable and $\beta_n \ed \beta_{\theta,1}$ makes their product $(T_n/S_n) \beta_n \ed \gamma_\theta$.
Moreover
\begin{equation}
    \frac{T_{n} } {T_{n+1} } = \frac{ T_n /S_{n+1} }  { T_{n+1}/S_{n+1} }  =  \frac{ (T_n/S_n) \beta_n }  {  (T_n/S_n) \beta_n + \eps_{n+1} } \ed \frac{ \gamma_\theta } {\gamma_\theta + \eps_1 } \ed \beta_{\theta,1}
\end{equation}
and this ratio $T_n/T_{n+1}$ is independent of the ratio $T_{n+1}/S_{n+1}$ in \eqref{tnsnrat}, again by beta-gamma algebra.
Thus $T_{n}/T_{n+1}$ and $T_{n+1}/S_{n+1}$ are independent with the required distributions.
By inductive assumption, $T_n/S_n$ is independent of the $n$ ratios $T_{i-1}/T_i$ for $1 \le i \le n$.
The two variables $T_{n}/T_{n+1}$ and $T_{n+1}/S_{n+1}$ are functions of $T_n/S_n$ and two further independent variables $\beta_n$ and $\eps_{n+1}$.
Hence the required independence of the $n+2$ variables involved in \eqref{nrats} with $n+1$ in place of $n$.
\end{proof}

\subsection{Second proof - to exploit symmetry}

To see the symmetry, consider the {\em time-inversion} $\tilde T := (\tilde T(s) := T(s^{-1}), s > 0)$ of $T$.
It is obvious that $\tilde T$ has a non-increasing staircase path, which is fully determined by its {\em corners}, i.e. points on the left end of each flat of the path.

To describe the corners, we restate, in a time-reversed manner, the {\em backward hold-jump description} of $\tilde T$. Conditional on $\tilde T(s) = t$, the {\em past} of $\tilde T$ before time $s$ can be fully constructed by
\begin{itemize}
    \item {\em `Hold'} - at level $t$ till the random time $H_s \ed s \beta$ (this is going backward in time), where $\beta$ has the common $beta(\theta, 1)$ distribution;
    \item {\em `Jump'} - up by $H_s^{-1} J$, where $J$ is the generic jump and is independent of $H_s$;
    \item then repeat, conditioning on $\tilde T(s') = t'$ for $s' = H_s, t' = t + H_s^{-1} J$.
\end{itemize}

\begin{lemma}\label{thm:symmetry}
    Suppose $T$ is a $1$-SSA gamma process with $T(1) \sim$ gamma$(\theta, 1)$ and $\tilde T := (\tilde T(s) := T(s^{-1}), s > 0)$ is the time-inversion of $T$. Then the set $\CC$ of corners of the path of $\tilde T$ is symmetric about the bisectrix.
\end{lemma}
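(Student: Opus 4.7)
The plan is to compute the joint density of $n$ consecutive corners of $\tilde T$ under the Markov chain induced by the backward hold-jump description, and then verify that this density is invariant under swapping the coordinates of each corner together with reversing the indexing; by the Markov structure, this forces the unordered set $\CC$ to be invariant in distribution under the coordinate swap.

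First I would compute the first-order intensity $\rho_1(u,t)$ of the corner point process. Each corner has the form $(1/S, T(S-))$ at a jump time $S$ of $T$. By Theorem~\ref{thm:ktothetaJ} the jump times form a scale invariant PPP with intensity $\theta s^{-1}\,ds$, and by the Mecke formula together with stochastic continuity of $T$, at a jump time $S$ the pre-jump level satisfies $T(S-) \ed S\cdot\Gamma(\theta,1)$. Changing variables $u = 1/s$ yields
\begin{equation*}
    \rho_1(u,t) = \frac{\theta (ut)^{\theta-1}}{\Gamma(\theta)}\, e^{-ut},
\end{equation*}
manifestly symmetric under $u \leftrightarrow t$.

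Next, specializing the backward hold-jump description to the gamma case (taking $\lambda = 1$ by scaling, so $J \sim \exp(1)$), the transition from a corner $(u_1, t_1)$ to the next backward corner takes the form $(u_2, t_2) = (u_1 \beta,\, t_1 + \epsilon/u_1)$, with independent $\beta \sim \mathrm{beta}(\theta,1)$ and $\epsilon \sim \exp(1)$. The Jacobian of $(\beta, \epsilon) \mapsto (u_2, t_2)$ equals one, giving transition density $\theta (u_2/u_1)^{\theta-1} e^{-(t_2 - t_1) u_1}$ on $\{0 < u_2 < u_1,\, t_2 > t_1\}$. Multiplying the intensity with the $n-1$ transition densities, the intermediate powers of $u_i$ telescope to $(t_1 u_n)^{\theta-1}$, yielding
\begin{equation*}
    \rho_n(c_1,\ldots,c_n) = \frac{\theta^n (t_1 u_n)^{\theta-1}}{\Gamma(\theta)} \exp\left(-u_1 t_1 - \sum_{i=1}^{n-1} u_i(t_{i+1} - t_i)\right).
\end{equation*}

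A direct summation-by-parts check shows that $\rho_n$ is invariant under the substitution $(u_i, t_i) \mapsto (t_{n-i+1}, u_{n-i+1})$, which represents swapping coordinates of each corner together with reversing the index to preserve the decreasing-$u$ convention: both the prefactor and the exponent transform into themselves. Consequently the doubly infinite Markov chain of corners is equidistributed with its swap-reversal, so the unordered set $\CC$ is invariant in distribution under the coordinate swap, which is the claimed symmetry. The main obstacle is the clean telescoping of the product with correct index bookkeeping, and verifying the exponent's symmetry under the swap-and-reverse; the exponential form of $J$ in the gamma case is precisely what enables this cancellation, consistent with the ``only if'' direction of Theorem~\ref{thm:GammatoPPP}.
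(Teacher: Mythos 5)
Your proposal is correct and follows essentially the same route as the paper: both compute the joint density of $N$ consecutive corners by multiplying the first-order corner intensity (the $\theta/s$ factor from the jump-time PPP times the gamma$(\theta,s)$ density of the pre-jump level) by the backward hold-jump transition densities, observe that the powers of the $s$-coordinates telescope to $(t_1 u_N)^{\theta-1}$, and check that the remaining exponent is invariant under the swap-and-reverse substitution. Your expression for $\rho_n$ agrees exactly with the paper's \eqref{eqn:jointdensitytheta} after rewriting the exponent, so no further comparison is needed.
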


Theorem \ref{thm:GammatoPPP} then follows since
\begin{equation}
    range(T) = range(\tilde T) = \pi_t (\CC) \ed \pi_s (\CC),
\end{equation}
where $\pi_t$ and $\pi_s$ are projections onto the $t$- and $s$-axes, and $\pi_s (\CC)$ is a scale invariant PPP on $\R_+$ with rate $\theta$ thanks the invariance under inversion \eqref{eqn:siptimeinv}.

\begin{proof} (Lemma \ref{thm:symmetry})

    Consider the joint density $p_m(a_1, \cdots, a_m)$ of the event that there are $N$ consecutive points $a_i = (s_i, t_i) \in \CC$ for $i = 1, 2, \cdots, N$ indexed decreasingly in $s$-coordinate
    \begin{equation}
        s_1 > s_2 > \cdots > s_N,
    \end{equation}
    hence is indexed increasingly in $t$-coordinate. Knowing $J \sim$ gamma$(1, 1)$,
    \begin{align}
        p_N (a_1, \cdots, a_N) &= \frac {s_1^{\theta}} {\Gamma(\theta)} (t_{0})^{\theta-1} e^{-s_1 t_1 } \cdot \frac \theta {s_1} \cdot \left[ \prod_{n=1}^{N-1} \theta s_{i+1}^{\theta - 1} s_i^{-\theta} \right ] \cdot \left[ \prod_{n=1}^{N-1} s_i e^{-s_i(t_{i+1} - t_i)} \right ]\\
        \label{eqn:jointdensitytheta}&= \frac {\theta^N} {\Gamma(\theta)} (t_1 s_N)^{\theta-1} \exp \left\{ \sum_{n = 2}^{N-1} s_n t_n - \sum_{n=1}^{N-1} s_n t_{n+1} \right\}.
    \end{align}
    Everything in the first line is self-explanatory from the backward hold-jump description given above, except that the second term $\theta / s_1$ accounts for a `hold' with $\beta = 1$, meaning that there is an immediate jump at time $s_1$.
    
    Expression \eqref{eqn:jointdensitytheta} is invariant under the substitution
    \begin{equation}
        (\tilde s_1, \tilde s_2, \cdots, \tilde s_N) \leftrightarrow (\tilde t_N, \tilde t_{N-1}, \cdots, \tilde t_1),
    \end{equation}
    which proved the symmetry as desired.
\end{proof}

This proof is inspired by Gnedin \cite[Equation (5)]{gnedin2008corners} where a similar symmetry was shown for a different setup of corners constructed from a PPP on $\R_+^2$ with unit intensity.
We also remark that the the proof of the Poisson spacing theorem by Arratia, Barbour and Tavar\'e \cite[Lemma 7.1]{arratia2006tale} is also done by checking the density of consecutive points. 
However, we manage to avoid the brutal integration in their proof by exploiting symmetry.

\section{Uniqueness of the distribution of the range} \label{sec:proofUnique}

In this section, we establish the following uniqueness theorem, from which the 
``if'' part of Theorem \ref{thm:GammatoPPP} follows immediately.

\begin{theorem}\label{thm:unique}
    Suppose $T$ and $\tilde T$ are two $1$-SSA non-decreasing processes with no drift and the same finite \ssarate $\theta$. Then
    \begin{equation}
        range(T) \ed range(\tilde T) \text{\qquad implies \qquad } T \ed c \tilde T \text{\qquad for some\qquad} c > 0.
    \end{equation}
\end{theorem}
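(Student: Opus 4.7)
The plan is to extract from $range(T)$ enough information about the generic jump distribution of $T$ to pin it down up to a multiplicative constant, using the hold-jump description of Theorem \ref{thm:HoldJump}. The key observation is that the ratios of consecutive spacings of the range have a tractable multiplicative form which, passed through logarithms and characteristic functions, reduces the problem to a Cauchy-type functional equation for the phase of the characteristic function of $\log J$.

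Concretely, list $range(T) = \{T_n\}_{n \in \Z}$ in increasing order. By Theorem \ref{thm:ktothetaJ} the consecutive spacings of the range are the time-ordered jumps $T_n - T_{n-1} = S_n J_n$, where $(S_n)$ is the scale-invariant PPP of jump times and $(J_n)$ are i.i.d.\ copies of the generic jump $J$, independent of $(S_n)$. Setting $\beta_{n-1} := S_{n-1}/S_n$, Lemma \ref{thm:sipbeta} ensures the $\beta_n$ are i.i.d.\ beta$(\theta,1)$, so
\[
R_n := \frac{T_n - T_{n-1}}{T_{n-1} - T_{n-2}} = \frac{J_n}{\beta_{n-1}\, J_{n-1}}.
\]
The joint law of $(R_n, R_{n+1})$ is a measurable function of $range(T)$, so by hypothesis agrees with its analogue for $\tilde T$. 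Writing $X_n := \log J_n$ and $E_n := -\log \beta_n \sim \exp(\theta)$, a direct computation gives the log-joint characteristic function
\[
\E\!\bigl[R_n^{i s_1}\, R_{n+1}^{i s_2}\bigr] = \chi(s_1 - s_2)\,\chi(-s_1)\,\chi(s_2)\,\mu(s_1)\,\mu(s_2),
\]
with $\chi(s) := \E[J^{is}]$ and $\mu(s) := \E[\beta^{-is}] = \theta/(\theta - is)$. Dividing through by the never-vanishing factors $\mu(s_1)\mu(s_2)$ in the identity inherited from $range(T) \ed range(\tilde T)$ yields
\[
\chi(s_1 - s_2)\,\chi(-s_1)\,\chi(s_2) = \tilde\chi(s_1 - s_2)\,\tilde\chi(-s_1)\,\tilde\chi(s_2).
\]

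Setting $s_2 = 0$ gives $|\chi(s)| = |\tilde\chi(s)|$ for every real $s$. Writing $\chi(s) = r(s) e^{i\alpha(s)}$ and $\tilde\chi(s) = r(s) e^{i\tilde\alpha(s)}$, the phase difference $\delta(s) := \alpha(s) - \tilde\alpha(s)$ must satisfy the Cauchy-type identity $\delta(s_1 - s_2) \equiv \delta(s_1) - \delta(s_2) \pmod{2\pi}$; together with $\delta(0) = 0$ this is additivity modulo $2\pi$ and, by continuity, lifts to $\delta(s) = cs$ for some $c \in \R$. Hence $\chi(s) = e^{ics}\,\tilde\chi(s)$ for all $s \in \R$, i.e.\ $\log J \ed \log \tilde J + c$, so $J \ed e^c \tilde J$. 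By Corollary \ref{thm:logfinite} a $1$-SSA non-decreasing process with no drift is determined in law by $\theta$ together with the law of $J$, and scaling $\tilde T$ by $e^c$ produces a process with rate $\theta$ and generic jump $e^c \tilde J \ed J$; we conclude $T \ed e^c \tilde T$.

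The main obstacle is the phase-lifting step: $\alpha$ is only continuously definable where $\chi$ is non-zero, and real zeros of $\chi$ would disrupt the Cauchy argument. This is plausibly the role of the ``technical condition'' referenced in the paper, namely a mild hypothesis ensuring non-vanishing of the characteristic function of $\log J$ on $\R$, for instance a smoothness or strict-positivity assumption on the density of $\log J$. If $\chi$ is allowed to vanish, one must either piece the argument together between successive real zeros via analytic continuation of $\chi$ as a Mellin transform, or upgrade the observable from $(R_n, R_{n+1})$ to a longer block $(R_n, R_{n+1}, \ldots, R_{n+k})$ to recover the missing phase information from higher-order correlations.
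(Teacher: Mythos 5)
Your reduction matches the paper's up to the midpoint: you pass to the ratios of consecutive spacings of the range, identify them as $J_n/(\beta_{n-1}J_{n-1})$ via the hold-jump description, and cancel the beta factors using the fact that $\E[\beta^{-is}] = \theta/(\theta-is)$ never vanishes. The paper does exactly this (its Lemma \ref{thm:MultiSimpRV}), arriving at the statement that the joint law of the sequence $(J_{n-1}/J_n)_{n\ge 2}$ is determined by the range. The divergence, and the gap, is in how you finish. Your Cauchy-functional-equation argument for the phase of $\chi(s)=\E[J^{is}]$ genuinely fails when $\chi$ has real zeros, and nothing in the hypotheses excludes this: by Corollary \ref{thm:logfinite} the law of $\log J$ can be essentially arbitrary subject to $\E\log^+J<\infty$, and there are rapidly decaying densities on $\R$ whose characteristic functions are compactly supported (take $|\check u|^2/\|u\|_2^2$ for $u$ smooth and compactly supported), so $\chi$ may vanish identically outside a compact set and the phase relation cannot be propagated. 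You flag this obstacle yourself, but neither of your proposed repairs is carried out, and your guess that the paper's ``technical condition'' is a non-vanishing hypothesis on $\chi$ is wrong: Theorem \ref{thm:unique} as stated and proved carries no such condition.

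The paper closes the gap with a different and more robust device, its Lemma \ref{thm:diffEd}. From the full sequence of ratios one gets $(X_1/X_2,\dots,X_1/X_n)\ed(Y_1/Y_2,\dots,Y_1/Y_n)$ for every $n$; after multiplying each coordinate by an independent exponential $Z_j$ (so that $Z/X$ has a continuous distribution function), the central order statistic of the $2k-1$ coordinates converges almost surely to $m_xX_1$, with $m_x$ the median of $Z/X$, and likewise for $Y$, giving $m_xX_1\ed m_yY_1$ directly. This exploits arbitrarily long blocks of ratios --- precisely the ``upgrade'' you mention only in passing --- and needs no information about the zeros of the characteristic function of $\log J$. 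To turn your write-up into a proof you would need to import an argument of this kind, or else add (and then justify removing) a non-vanishing hypothesis that the theorem does not actually require.
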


\begin{proof}
    Following (the results and also the notation of) Theorem \ref{thm:HoldJump}, we may write $S$ in terms of $T$ and $J$ in \eqref{eqn:srec}, then \eqref{eqn:trec} becomes
	\begin{equation}\label{eqn:TJ_relation}
		\dfrac {T_{n} - T_{n-1}} {T_{n+1} - T_{n}} = \dfrac {\beta_n J_{n-1}} {J _n},\qquad \forall n \ge 2,
	\end{equation}
	where (still, as in Theorem \ref{thm:HoldJump}) $1 < T_1 < T_2 < \cdots$ is an exhaustive ordered listing of points of $T$ on $(1,\infty)$, $\beta_n$ are i.i.d. Beta($\theta, 1$)'s and $J_n$ are i.i.d. copies of the generic jump $J$.
	
	Similarly, for another $1$-SSA process $\tilde T$ with \ssarate $\theta$ whose range is equal in distribution as the one of $T$, we have
	\begin{equation}\label{eqn:TJ_relation_tilde}
	    \dfrac {\tilde T_{n} - \tilde T_{n-1}} {\tilde T_{n+1} - \tilde T_{n}} = \dfrac {\tilde \beta_n \tilde J_{n-1}} {\tilde J _n},\qquad \forall n \ge 2,
	\end{equation}
	where every random variable is similarly defined as in \eqref{eqn:TJ_relation} for $\tilde T$ instead of $T$.
	
	By assumption, $T$ and $\tilde T$ are both $1$-SSA with the same finite \ssarate $\theta$. So it is sufficient to show $J \ed \tilde J$.
	
	Note that $range(T) \ed range(\tilde T)$ implies
	\begin{equation}
		\left(T_{n}, n \ge 1 \right) \ed \left(\tilde T_{n}, n \ge 1 \right).
	\end{equation}

	Therefore, by \eqref{eqn:TJ_relation} and \eqref{eqn:TJ_relation_tilde},
	\begin{equation}\label{bar_nobar_with_beta}
		\left(\dfrac {\beta_n J_{n-1}} {J_n}, n \ge 2 \right) \ed \left(\dfrac {\tilde \beta_n \tilde J_{n-1}} {\tilde J_n}, n \ge 2 \right).
	\end{equation}
	
	By Lemma \ref{thm:MultiSimpRV} below, \eqref{bar_nobar_with_beta} can be simplified to
	\begin{equation}\label{bar_nobar}
		\left(\dfrac {J_{n-1}} {J_n}, n \ge 2 \right) \ed \left(\dfrac {\tilde J_{n-1}} {\tilde J_n}, n \ge 2 \right)
	\end{equation}
	which implies $J \ed c \tilde J$ thanks to Lemma \ref{thm:diffEd}.
\end{proof}

This Lemma \ref{thm:MultiSimpRV} is a multivariate extension of a simplified version of exercise 1.13.1 in Chaumont and Yor \cite{chaumont2012exercises}, since we only need the case when all coordinates are strictly positive.

Recall $\R_+ := (0, \infty)$. Suppose $Y = (Y_1, Y_2, \cdots, Y_n)$ is a $\R_+^n$-valued random variable. Let $\Phi_{\log(Y)}$ denote the characteristic function of $\log Y := (\log Y_1, \log Y_2, \cdots, \log Y_n)$
\begin{equation}
    \Phi_{\log(Y)}(\lambda) := \E \exp \{ i\lambda \cdot \log(Y) \}, \qquad \lambda \in \R^n,
\end{equation}
where $\cdot$ is the usual inner product of vectors.

\begin{lemma}[Multivariate simplifiable random variables]\label{thm:MultiSimpRV}
	If the non-zero set of the characteristic function of $(\log Y)$, i.e. $\{ \lambda: \Phi_{\log Y} (\lambda) \neq 0\}$, is dense in $\R^n$, then $Y$ is {\em multivariate simplifiable}, i.e. for all $\R_+^n$-valued random variables $X, Z$ independent of $Y$,
	\begin{equation}
	    X \times Y \ed Z \times Y \text{\qquad implies \qquad } X \ed Z,
	\end{equation}
	where $\times$ denotes the entry-wise product $X \times Y := (X_1 Y_1, X_2 Y_2, \cdots, X_n Y_n)$.
	
	In particular, for each $\theta > 0$, if $Y_1, Y_2, \cdots Y_n$ are i.i.d. copies of $\beta \sim$ beta($\theta, 1$), $Y$ is multivariate simplifiable.
\end{lemma}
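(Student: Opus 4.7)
The plan is to reduce the multiplicative identity on $\R_+^n$ to an additive one on $\R^n$ via coordinatewise logarithms, and then apply elementary Fourier analysis. Set $X' := \log X$, $Y' := \log Y$, $Z' := \log Z$, understood componentwise. Since the logarithm is a bijection of $\R_+$ onto $\R$, the hypothesis $X \times Y \ed Z \times Y$ translates to $X' + Y' \ed Z' + Y'$ as $\R^n$-valued random variables. Because $X$ and $Z$ are each independent of $Y$, so are $X'$ and $Z'$ of $Y'$; taking multivariate characteristic functions converts the convolution identity into the pointwise product identity
\begin{equation*}
    \Phi_{X'}(\lambda)\, \Phi_{Y'}(\lambda) \;=\; \Phi_{Z'}(\lambda)\, \Phi_{Y'}(\lambda), \qquad \lambda \in \R^n.
\end{equation*}

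On the set $D := \{\lambda \in \R^n : \Phi_{Y'}(\lambda) \neq 0\}$, which is dense in $\R^n$ by hypothesis, division yields $\Phi_{X'}(\lambda) = \Phi_{Z'}(\lambda)$. Both sides are continuous functions on all of $\R^n$, and two continuous functions that agree on a dense subset agree everywhere. Hence $\Phi_{X'} \equiv \Phi_{Z'}$ on $\R^n$, and the uniqueness theorem for multivariate characteristic functions gives $X' \ed Z'$, from which $X \ed Z$ follows by applying the exponential coordinatewise.

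For the concrete beta case, a direct computation gives
\begin{equation*}
    \E\, \beta^{i\lambda} \;=\; \int_0^1 \theta\, u^{\theta - 1 + i\lambda}\, du \;=\; \frac{\theta}{\theta + i\lambda},
\end{equation*}
and by independence of the coordinates, $\Phi_{\log Y}(\lambda) = \prod_{k=1}^n \theta/(\theta + i\lambda_k)$, which never vanishes on $\R^n$. Thus $D = \R^n$, density is trivial, and the first part applies directly. I do not anticipate any substantive obstacle: the only delicate point is handling the zero set of $\Phi_{Y'}$, and the density assumption together with continuity of characteristic functions is precisely tailored to circumvent it.
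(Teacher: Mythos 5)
Your proof is correct and follows essentially the same route as the paper's: pass to logarithms, factor the characteristic functions, cancel on the dense non-vanishing set, and verify non-vanishing explicitly in the beta case via $\E\,\beta^{i\lambda} = \theta/(\theta+i\lambda)$. Your version is in fact slightly more careful, since you make explicit the continuity-plus-density step needed to conclude $\Phi_{\log X} \equiv \Phi_{\log Z}$ everywhere, which the paper leaves implicit.
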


\begin{proof}
	\begin{multline}
	    \Phi_{\log (X\times Y)} (\lambda) = \E \exp[i\lambda \cdot \log(X \times Y)] = \E \exp[i\lambda \cdot (\log X + \log Y)]\\
	    = \Phi_{\log X} (\lambda) \Phi_{\log Y} (\lambda), \qquad \forall \lambda \in \R^n.
	\end{multline}
	and similarly $\Phi_{\log (Z\times Y)} (\lambda) = \Phi_{\log Z} (\lambda) \Phi_{\log Y} (\lambda)$. Hence, $X \times Y \ed Z \times Y$ implies
	\begin{equation}
	    \Phi_{\log X} (\lambda) \Phi_{\log Y} (\lambda) = \Phi_{\log Z} (\lambda) \Phi_{\log Y} (\lambda), \qquad \lambda \in \R^n.
	\end{equation}
	
	Then the cancellation of $\Phi_{\log Y} (\lambda)$ on a dense subset $\{ \lambda: \Phi_{\log Y} (\lambda) \neq 0\}$ of $\R^n$ shows
	\begin{equation}
	    \log X \ed \log Z
	\end{equation}
	which easily implies $X \ed Z$.
	
	Now it is left to show that $\{ \lambda: \Phi_{\log Y} (\lambda) \neq 0\}$ is dense if $Y_1, Y_2, \cdots Y_n$ are i.i.d. copies of $\beta \sim$ beta($\theta, 1$). 
	
    Observe that the characteristic function of $\log \beta$ is non-zero on $\R$
    \begin{equation}
        \phi_{\log \beta}(\lambda) = \E \exp[i\lambda \log(\beta)] = \E \beta_k^{i\lambda} = \int_0^1 x^{i\lambda} ~ \theta x^{\theta - 1} dx = \frac \theta {\theta + i\lambda}, \qquad \forall \lambda \in \R.
    \end{equation}
    Hence the characteristic function of $\log Y$ is non-zero on $\R^n$.
\end{proof}

\begin{lemma}\label{thm:diffEd}
    Suppose $X$ and $Y$ are two positive random variables, with i.i.d. copies $X_n, Y_n, n = 1, 2, \cdots$, respectively. If for each $n$,
    \begin{equation}\label{eqn:diffEd}
        \left(\frac{X_1}{X_2}, \frac{X_2}{X_3}, \cdots, \frac{X_{n-1}}{X_n}\right) \ed \left(\frac{Y_1}{Y_2}, \frac{Y_2}{Y_3}, \cdots, \frac{Y_{n-1}}{Y_n}\right),
    \end{equation}
    then $Y \ed c X$ for some constant $c$.
\end{lemma}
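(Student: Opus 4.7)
The plan is to take logarithms and reduce to the additive analogue. Setting $U_k := \log X_k$ and $V_k := \log Y_k$, the hypothesis \eqref{eqn:diffEd} is equivalent to the statement that the vectors of consecutive differences $(U_1 - U_2, \ldots, U_{n-1} - U_n)$ and $(V_1 - V_2, \ldots, V_{n-1} - V_n)$ are equal in distribution for every $n$, and the conclusion to prove becomes $V \ed U + \alpha$ for some $\alpha \in \R$. A direct computation of the joint characteristic function of the differences at $(\lambda_1, \ldots, \lambda_{n-1}) \in \R^{n-1}$ yields $\prod_{k=1}^n \phi_U(d_k)$, where $d_1 := \lambda_1$, $d_k := \lambda_k - \lambda_{k-1}$ for $2 \le k \le n-1$, $d_n := -\lambda_{n-1}$, and $\phi_U$ denotes the characteristic function of $U$. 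Since $(d_1, \ldots, d_n)$ ranges over all $n$-tuples summing to zero as $(\lambda_1, \ldots, \lambda_{n-1})$ ranges over $\R^{n-1}$, the hypothesis is equivalent to the key identity
\begin{equation*}
\prod_{k=1}^n \phi_U(d_k) = \prod_{k=1}^n \phi_V(d_k) \qquad \text{for all } n \ge 2 \text{ and all } (d_k) \text{ with } \sum_k d_k = 0.
\end{equation*}

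I will then work on a neighborhood $(-\eps, \eps)$ of the origin where both $\phi_U$ and $\phi_V$ are nonzero (by continuity and $\phi_U(0) = \phi_V(0) = 1$), and set $f(\lambda) := \phi_V(\lambda)/\phi_U(\lambda)$ there. Applying the key identity with $n = 2$ and $(d_1, d_2) = (\lambda, -\lambda)$ yields $f(\lambda) f(-\lambda) = 1$, and combining with the Hermitian property $f(-\lambda) = \overline{f(\lambda)}$ forces $|f| \equiv 1$. Applying it next with $n = 3$ and $(d_1, d_2, d_3) = (a, b, -a-b)$, and using $f(-a-b) = 1/f(a+b)$ from the previous step, produces the multiplicative Cauchy equation $f(a+b) = f(a) f(b)$ on the neighborhood. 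By continuity of $f$ and $|f| \equiv 1$, the unique solution is $f(\lambda) = e^{i\alpha\lambda}$ for some $\alpha \in \R$.

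The remaining step is to promote $\phi_V(\lambda) = e^{i\alpha\lambda}\phi_U(\lambda)$ from $(-\eps, \eps)$ to all of $\R$, and this is the main obstacle, since $\phi_U$ may well vanish outside the neighborhood and direct analytic continuation of $f$ fails. The trick is to exploit the key identity for arbitrarily large $n$: given $\lambda \in \R$, choose $n$ with $|\lambda|/n < \eps$ and apply the identity with $d_1 = \cdots = d_n = \lambda/n$ and $d_{n+1} = -\lambda$ (which sum to zero), obtaining
\begin{equation*}
\phi_U(\lambda/n)^n\, \phi_U(-\lambda) = \phi_V(\lambda/n)^n\, \phi_V(-\lambda).
\end{equation*}
Substituting $\phi_V(\lambda/n) = e^{i\alpha\lambda/n}\phi_U(\lambda/n)$ gives $\phi_V(\lambda/n)^n = e^{i\alpha\lambda}\phi_U(\lambda/n)^n$, and cancelling the (nonzero) factor $\phi_U(\lambda/n)^n$ yields $\phi_V(-\lambda) = e^{-i\alpha\lambda}\phi_U(-\lambda)$. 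Replacing $\lambda$ by $-\lambda$, this reads $\phi_V(\lambda) = e^{i\alpha\lambda}\phi_U(\lambda)$ for every $\lambda \in \R$. Inverting characteristic functions gives $V \ed U + \alpha$, hence $Y \ed c X$ with $c := e^{\alpha}$.
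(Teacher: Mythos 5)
Your proof is correct, but it takes a genuinely different route from the paper's. You pass to logarithms and work with characteristic functions: the hypothesis becomes the identity $\prod_{k}\phi_U(d_k)=\prod_{k}\phi_V(d_k)$ over all zero-sum tuples, from which you extract $|\phi_U|=|\phi_V|$, a local multiplicative Cauchy equation for the ratio $f=\phi_V/\phi_U$ near the origin, and then --- this is the essential step, and you correctly flag it as the main obstacle --- a globalization via the zero-sum tuple $(\lambda/n,\ldots,\lambda/n,-\lambda)$, which circumvents the possible vanishing of $\phi_U$ away from $0$. The paper argues probabilistically instead: it rewrites the hypothesis as $(X_1/X_k,\,2\le k\le n)\ed(Y_1/Y_k,\,2\le k\le n)$, smooths by independent exponential factors $Z_k$ so that $Z/X$ has a continuous distribution function with a unique median, and lets the sample median of the $n-1$ conditionally i.i.d.\ variables $X_1Z_k/X_k$ converge almost surely to $m_xX_1$, concluding $m_xX_1\ed m_yY_1$ and hence $Y\ed cX$ with $c=m_y/m_x$. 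Both arguments genuinely need the hypothesis for arbitrarily large $n$. Yours is more self-contained (no appeal to the consistency of sample medians) and makes explicit exactly what distributional information the consecutive-ratio vectors encode, namely the products of $\phi$ over zero-sum frequency tuples; the paper's avoids Fourier analysis entirely and produces the scaling constant concretely as a ratio of medians. One small point to tidy in your write-up: the relation $f(a+b)=f(a)f(b)$ is only available when $a$, $b$ and $a+b$ all lie in $(-\eps,\eps)$, but the continuous unimodular solutions of this restricted Cauchy equation with $f(0)=1$ are still exactly $\lambda\mapsto e^{i\alpha\lambda}$ on the neighborhood, so your conclusion stands.
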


\begin{proof}
    Identity \eqref{eqn:diffEd} is equivalent to
    \begin{equation}
        \left(\frac{X_1}{X_2}, \frac{X_1}{X_3}, \cdots, \frac{X_1}{X_n}\right) \ed \left(\frac{Y_1}{Y_2}, \frac{Y_1}{Y_3}, \cdots, \frac{Y_1}{Y_n}\right),
    \end{equation}
    which implies
    \begin{equation}\label{eqn:extraZ}
        \left(X_1 \frac{Z_2}{X_2}, X_1 \frac{Z_3}{X_3}, \cdots, X_1 \frac{Z_n}{X_n}\right) \ed \left(Y_1 \frac{Z_2}{Y_2}, Y_1 \frac{Z_3}{Y_3}, \cdots, Y_1 \frac{Z_n}{Y_n}\right),
    \end{equation}
    where $Z_n, n = 2, 3, \cdots$ are i.i.d. copies of $Z \sim$ exp$(1)$ and are independent of the $X$- and $Y$-sequences. 
    
    The extra randomization $Z$ is only to ensure that the cumulative distribution function of $Z/X$ is continuous on $\R_+$.
    When $n := 2k \to \infty$, the $(k+1)$-th order statistic of the left sequence of \eqref{eqn:extraZ} converges almost surely to $m_x X_1$ where $m_x > 0$ is the median of $Z/X$. Similarly, the $(k+1)$-th order statistic of the right sequence converges almost surely to $m_y Y_1$ where $m_y > 0$ is the median of $Z/Y$.
    
	Hence
	\begin{equation}
	    m_x X_1 \ed m_y Y_2,
	\end{equation}
	which implies
	\begin{equation}
	    X \ed c Y \text{\qquad with \qquad} c = \frac {m_y} {m_x}.
	\end{equation}
\end{proof}

\section{Processes associated with selfdecomposable laws}\label{sec:2para}

In this section, we first introduce three different kinds of one-parameter processes associated with selfdecomposable laws, which are known in the previous study related to selfdecomposable laws.
Then we introduce a two-parameter process with selfdecomposable margins that behaves differently along its two parameters.

\subsection{One-parameter processes}

Let $X$ be a selfdecomposable random variable.
According to the result of Sato \cite{sato1991self} presented in Theorem \ref{thm:satoSD}, 
\begin{itemize}
    \item for each $H > 0$, there is a unique distribution of an  $H$-SSA process $T^{(H)} := (T^{(H)}(s), s > 0)$ such that $T^{(H)}(1) \ed X$. 
\end{itemize}
On the other hand, since $X$ is infinitely divisible,
\begin{itemize}
    \item there is a unique \Le process $U := (U(s), s > 0)$ with $U(1) \ed X$.
\end{itemize}
See Sato \cite[Section 4]{sato1991self} for a comparison between these two processes, where it is mentioned that $T^{(H)} \ed U$, in the sense of equality in finite-dimensional distributions, if and only if $X$ is a constant variable $0$ or strictly stable with index $1/H$.

A third process associated with selfdecomposable $X$ is known as the {\em background driving \Le process} ({\em BDLP}) first discussed by Wolfe \cite{wolfe1982continuous} and Jurek and Vervaat \cite{jurek1983integral}, and named by Jurek \cite{jurek1997selfdecomposability}. 
A \Le process $Y := (Y(r), r > 0)$ is called the BDLP of $X$ if

\begin{equation}
    X \ed \int_0^\infty e^{-r} dY(r).
\end{equation}

The relationship of $T^{(H)}$ and $Y$ is given by

\begin{equation}\label{eqn:ssatobd}
    Y(r) = \int_{e^{-r}}^1 s^{-H} d T^{(H)}(s), \qquad r > 0,
\end{equation}
and
\begin{equation}\label{eqn:bdtossa}
    T^{(H)}(s) = \int_{-\log(s)}^\infty e^{rH} dY(r), \qquad 0 < s < 1.
\end{equation}

To illustrate the differences among these three processes, we observe if $k(x) / x$ is the \Le density of $X$, then
\begin{itemize}
    \item the \Le density of $T^{(H)}(s) \ed T^{(1)}(s^H)$ is given by $k(s^{-H} x) / x$;
    \item the \Le density of $U(s)$ is given by $s k(x) / x$;
    \item the \Le density of $Y(r)$ is given by $r k(x)$.
\end{itemize}

The BDLP, denoted $Y$, can be easily extended for $s \le 0$ by setting $Y(0) = 0$ and $(-Y(-s), s > 0)$ an independent copy of $(Y(s), s > 0)$, whence \eqref{eqn:ssatobd} holds for $r \in \R$ and \eqref{eqn:bdtossa} holds for $s > 0$. See Jeanblanc, Pitman and Yor \cite[Theorem 1]{jeanblanc2002self}.

\subsection{A two-parameter process}

For simplicity, suppose the selfdecomposable random variable $X$ is also non-negative with \Le triple $(0, 0, k(x) x^{-1} dx)$ where $\theta := k(0+) < \infty$. 
So the associated $1$-SSA process $\Tsvar$ with $T(1) \ed X$ is non-decreasing with no drift, finite \ssarate $\theta$ and generic jump $J$ whose distribution is determined by
\begin{equation}
    \P(J > x) = k(x) / \theta.
\end{equation}

Moreover, $T(s)$ has the following \LI representation
\begin{equation}
    T(s) = \int_0^s \int_0^\infty x \eta(dy ~ dx) \qquad (s > 0),
\end{equation}
where $\eta$ is a Poisson point process on $\R_+^2$ satisfying \eqref{eqn:upppInt}, with intensity measure
\begin{equation}
    \nu(dy ~ dx) := \E \eta(dy ~ dx) = \theta y^{-1} dy \P(sJ \in dx).
\end{equation}

Based on $\eta$, define a Poisson point process on $\R_+^3$, say $\eta^*$, by setting its intensity measure $\nu^* := \E \eta^* = \nu \bigotimes \lambda$, where $\lambda$ is the ordinary Lebesgue measure on $\R_+$.
Now define a two-parameter process $(T(s, w), s > 0, w > 0)$ by setting
\begin{equation}
    T(s, w) := \int_{z \in (0, w]} \int_{y \in (0, s]} \int_{x > 0} x \eta^*(dz ~ dy ~ dx) \qquad (s, w > 0).
\end{equation}

Then the following properties of this two-parameter process are easily checked:
\begin{itemize}
    \item $T(s, w)$ is non-decreasing in both $s$ and $t$;
    \item for each fixed positive integer $w$, $(T(s, w), s > 0)$ is a $1$-SSA process obtained by adding $w$ independent copies of $T$;
    \item for each fixed $w > 0$, $T_{(w)} := T(\cdot, w) = (T(s, w), s > 0)$ is a $1$-SSA process with finite \ssarate $\theta w$ and generic jump $J$; 
    \item for each fixed $s > 0$, $T^{(s)} := T(s, \cdot) = (T(s, \theta), \theta > 0)$ is a subordinator;
    \item the family $(T_{(w)}, w > 0)$ is coupled such that it is a ``subordinator of $1$-SSA processes'', i.e. for fixed $u > v > 0$, the increment $T_{(u)} - T_{(v)} \ed T_{(u - v)}$ is a $1$-SSA non-decreasing process independent of $(T(s, w), s > 0, 0 < w < v)$;
    \item the family $(T^{(s)}, s > 0)$ is coupled such that it is an ``$1$-SSA family of subordinators'', i.e. for fixed $s > 0$, $T^{(s)} \ed s T^{(1)}$ and for fixed $u > v > 0$, the increment $T^{(u)} - T^{(v)}$ is a subordinator independent of $(T(s, w), 0 < s < v, w > 0)$;
    \item for each finite interval $I$, when $T$ is not identically $0$, the jumps of $T^{(s)} := (T^{(s)}(w) = T(s, w), w > 0)$ as a subordinator are almost surely dense on $I$;
    \item however, for fixed $u > v > 0$, the number of jumps of $T^{(u)} - T^{(v)}$ is almost surely finite on $I$.
\end{itemize}

We only discussed above the case when $T$ is non-decreasing and with finite \ssarate. But it is easy to see that the bivariate process can be defined more generally. In particular, it is easy to add a deterministic or Brownian component. But the case when $T$ has infinite \ssarate and jumps of both signs would require more care.

\section{Historical remarks}\label{sec:history}

\paragraph{Selfdecomposable laws and self-similar processes}

The class of selfdecomposable laws was first studied by Levy \cite{levy1937theorie} and Khintchine \cite{khintchine1938limit}, as an extension to stable laws as the limit distributions for sums of identically distribute random variables.
Self-similar processes were first studied Lamperti \cite{lamperti1962semi} as a generalization to stable processes, which he called {\em semi-stable processes}.

\begin{theorem}[Lamperti \cite{lamperti1962semi}]
    Let $\Tsvar$ be an additive process. Suppose $\Tsvar$ is also {\em semi-stable}, that is, for every $c > 0$, there exists a constant $b(c)$ such that
    \begin{equation}
        (T({cs}), s \ge 0) \ed (b(c) T(s), s \ge 0),
    \end{equation}
    then for some $H \ge 0$
    \begin{equation}
        b(c) = c^H \qquad ( c > 0 ).
    \end{equation}
\end{theorem}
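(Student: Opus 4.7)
The plan is to establish the multiplicative Cauchy equation $b(cd)=b(c)b(d)$ from the semi-stability relation, then extract continuity of $b$ from the stochastic continuity of the additive process $T$, and conclude $b(c)=c^H$ by the standard solution of the Cauchy equation.

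First I would observe that applying semi-stability twice yields, for any $c,d>0$,
\[(T(cds), s\ge 0)\ed (b(c)T(ds), s\ge 0)\ed (b(c)b(d)T(s), s\ge 0),\]
while also $(T(cds), s\ge 0)\ed (b(cd)T(s), s\ge 0)$. Provided $T$ is not identically zero (the trivial case, in which any $H\ge 0$ works), there is some $s_0>0$ at which $T(s_0)$ has a non-degenerate distribution, and evaluating the two equalities at time $s_0$ forces $b(cd)=b(c)b(d)$. The same non-degeneracy rules out $b(c)=0$ for any $c$: indeed, $b(c_0)=0$ would give $T(c_0 s)\ed 0$ for all $s>0$, and as $s$ ranges over $\R_+$ this forces $T\equiv 0$ by c\`adl\`ag paths. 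Thus $b:(0,\infty)\to(0,\infty)$ is a strictly positive multiplicative function.

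Next I would use stochastic continuity of $T$: for $s_0$ as above, $T(cs_0)\to T(s_0)$ in distribution as $c\to 1$, while $T(cs_0)\ed b(c)T(s_0)$. Any accumulation point $\alpha\ge 0$ of $b(c)$ as $c\to 1$ must satisfy $\alpha T(s_0)\ed T(s_0)$, and non-degeneracy of $T(s_0)$ together with $\alpha>0$ forces $\alpha=1$, so $b(c)\to 1$ as $c\to 1$. Combined with multiplicativity this yields continuity of $b$ on all of $(0,\infty)$; the function $x\mapsto\log b(e^x)$ is then a continuous additive map $\R\to\R$, hence linear, yielding $b(c)=c^H$ for some real $H$. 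Finally, $T(0)=0$ with stochastic continuity gives $T(cs_0)\to 0$ in probability as $c\to 0+$, which forces $b(c)\to 0$ and hence $H>0$ (the case $H=0$ corresponds to $T\equiv 0$).

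The main obstacle is securing the continuity of $b$: one must carefully exploit that $T(s_0)$ is non-degenerate for at least one $s_0$, and verify that the weak limit of $b(c)T(s_0)$ pins down $b(c)$ uniquely. Once this is established, the passage from the multiplicative Cauchy equation to $b(c)=c^H$ is entirely standard, and the sign constraint on $H$ is settled by the boundary behavior of $T$ at $s=0$.
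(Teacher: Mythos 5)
The paper does not prove this statement at all: it is quoted in Section \ref{sec:history} as a classical result of Lamperti, with the proof deferred to \cite{lamperti1962semi}. So there is nothing internal to compare against; what you have written is essentially Lamperti's original argument (a convergence-of-types step feeding a multiplicative Cauchy equation), and it is correct in outline and in all essential details. Two small points would tighten it. First, when you argue that $b(c)\to 1$ as $c\to 1$, you consider only finite accumulation points $\alpha\ge 0$; you should also rule out $b(c_n)\to\infty$ along some $c_n\to 1$, which follows because $b(c_n)T(s_0)\ed T(c_ns_0)\to T(s_0)$ in distribution forces tightness of $\{b(c_n)T(s_0)\}$, impossible when $T(s_0)$ is not a.s.\ $0$ and $b(c_n)\to\infty$. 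Second, you tacitly assume $b(c)\ge 0$ before concluding $b$ maps into $(0,\infty)$; for the non-negative processes this paper cares about that is automatic (a negative $b(c)$ would make $b(c)T(s)\le 0$ while $T(cs)\ge 0$), but for a general additive process with symmetric marginals $b$ is only determined up to sign and one fixes the positive choice by convention. Neither point is a gap in the idea, just in the bookkeeping; with those sentences added your proof stands on its own and matches the standard one.
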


Note that if $H = 0$, the process is trivial; otherwise, it is a $H$-SSA process.

The integral representation of selfdecomposable laws was studied by Wolfe \cite{wolfe1982continuous} for $\R$-valued random variables and then by Jurek and Vervaat \cite{jurek1983integral} for more general Banach space-valued random variables.
Although they did not consider self-similar processes, the integral representation
\begin{equation}
    T(1) = \int_0^\infty e^{-s} Y(ds),
\end{equation}
for $Y(\cdot)$ the background driving \Le process (BDLP) associated with $T(1)$, is essentially the same as our integral representation \eqref{eqn:SSAPPPrep} by treating $e^{-s}$ as our index $s$.
See also Jurek \cite{jurek2019background} for a recent study of selfdecomposable laws and the associated BDLP.

Sato \cite{sato1991self, sato1999levy} investigated in detail and built the connection between the class $L$ of selfdecomposable distributions and SSA processes as stated in Theorem \ref{thm:satoSD}.
There is also a detailed background and a comprehensive list of references to earlier results on selfdecomposable laws in \cite{sato1991self}.

Jeanblanc, Pitman and Yor \cite{jeanblanc2002self} pointed out that either of the two representations by Wolfe \cite{wolfe1982continuous} and Sato \cite{sato1991self} follows easily from the other.
That reference also provides further background theory of selfdecomposable laws and their representations. 
Bertoin \cite{bertoin2002entrance} treats the entrance law of self-similar processes.
Tudor \cite{tudor2013analysis} studied the variations of self-similar processes from a stochastic calculus approach.

\paragraph{Summation representation of $T(1)$}
In the setup of Theorem \ref{thm:ktothetaJ}, the identity \eqref{eqn:sumjump} can be viewed as a decomposition of the selfdecomposable random variable $T(s)$.
For simplicity, consider only $T(1)$.
Now index $S_z$ such that
\begin{equation}
    0 < \cdots < S_{-2} < S_{-1} < S_0 \le 1 < S_1 < S_2 < \cdots < \infty.
\end{equation}

By Lemma \ref{thm:sipbeta}
\begin{equation}
    S_{1-n} = \prod_{k = 1}^n \beta_k \qquad (n \ge 1),
\end{equation}
where $\beta_k$ are i.i.d. beta($\theta, 1$) random variables.

\begin{corollary}
    Suppose $T(1)$ is a non-negative, selfdecomposable random variable. Then there exists a sequence of i.i.d. non-negative random variables $J_z$ such that
    \begin{equation}\label{eqn:sumjump2}
        T(1) \ed \sum_{n = 1}^\infty \left(\prod_{k = 1}^n \beta_k \right) J_{1-n},
    \end{equation}
    where $\beta_k$ are i.i.d. beta($\theta, 1$) random variables independent of $J_z$.
\end{corollary}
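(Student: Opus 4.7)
The plan is to show that this corollary is an immediate corollary of Theorem \ref{thm:ktothetaJ} combined with the third bullet of Lemma \ref{thm:sipbeta}, specialized to $s = 1$. Since $\beta_k$ is declared to be beta$(\theta,1)$ in the statement, I read the hypothesis as implicitly requiring that the associated $1$-SSA process $T$ have finite rate $\theta := k(0+)$; the case $k(0+) = \infty$ would require a separate truncation argument and is not covered by the statement as written.

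First, I would invoke Theorem \ref{thm:satoSD} (or just Sato's construction) to attach to $T(1)$ the unique $1$-SSA non-decreasing process $\Tsvar$ with no drift. Then Theorem \ref{thm:ktothetaJ} gives the representation
\begin{equation*}
T(s) = \sum_{z \in \Z} S_z J_z \, 1(S_z \le s),
\end{equation*}
with $(S_z, z \in \Z)$ the scale invariant PPP of jump times on $\R_+$ with rate $\theta$, and $(J_z, z \in \Z)$ i.i.d.\ copies of the generic jump $J$, the two sequences being independent. Specializing to $s = 1$ with the indexing $\cdots < S_{-1} < S_0 \le 1 < S_1 < \cdots$ specified in the statement, only the indices $z \le 0$ contribute, so after the change of variable $n = 1 - z$ I get
\begin{equation*}
T(1) = \sum_{n=1}^\infty S_{1-n} J_{1-n}.
\end{equation*}

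Next, I would apply Lemma \ref{thm:sipbeta} with $x = 1$, in the form of the third bullet: $S_0/1$ and the ratios $S_{z-1}/S_z$ for $z < 0$ are i.i.d.\ beta$(\theta,1)$. Setting $\beta_1 := S_0$ and $\beta_k := S_{1-k}/S_{2-k}$ for $k \ge 2$ yields a telescoping product
\begin{equation*}
\prod_{k=1}^n \beta_k = S_0 \cdot \frac{S_{-1}}{S_0} \cdot \frac{S_{-2}}{S_{-1}} \cdots \frac{S_{1-n}}{S_{2-n}} = S_{1-n}, \qquad n \ge 1,
\end{equation*}
and the $(\beta_k)$ are i.i.d.\ beta$(\theta,1)$. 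Substituting into the display for $T(1)$ and relabeling $J_{1-n}$ as before gives the claimed identity in distribution. Independence of the $\beta$-sequence from the $J$-sequence is inherited from the independence of the jump times and normalized jumps asserted in Theorem \ref{thm:ktothetaJ}, since the $\beta_k$ are measurable functions of the jump times alone.

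There is essentially no obstacle here beyond bookkeeping: convergence of the series is automatic because the left-hand side $T(1)$ is finite almost surely, and the order of summation is pinned down by the increasing indexing. The only point that deserves a sentence of comment is that the identity really is an equality of distributions (not of random variables) because the $\beta_k$ and $J_{1-n}$ on the right are simply constructed from i.i.d.\ samples with the specified marginal laws, matching the joint law obtained from the underlying SSA representation.
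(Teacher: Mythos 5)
Your proposal is correct and follows essentially the same route as the paper: the authors also obtain the corollary by specializing the representation \eqref{eqn:sumjump} of Theorem \ref{thm:ktothetaJ} to $s=1$ (so only the indices $z\le 0$ contribute) and writing $S_{1-n}=\prod_{k=1}^n\beta_k$ via the telescoping beta ratios from Lemma \ref{thm:sipbeta}. Your explicit remark that the statement implicitly assumes the finite-rate case $k(0+)=\theta<\infty$ matches the paper's framing (``in the setup of Theorem \ref{thm:ktothetaJ}''), so nothing is missing.
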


The distribution of a random variable $T(1)$ admitting the representation \eqref{eqn:sumjump2} for i.i.d. sequences $\beta_k$ (not necessarily beta) and $J_k$ was studied first by Vervaat \cite[example 3.8]{vervaat1979stochastic} as the solution of the stochastic difference equation
\begin{equation}\label{eqn:vervaatsde}
    X \ed A(X + C),
\end{equation}
with $X, A, C$ independent and $|A| < 1$. By iteration,
\begin{equation}\label{vervaatsolution}
    X \ed \sum_{n=1}^\infty \left(\prod_{k = 1}^n A_k \right) C_n,
\end{equation}
where $A_k, C_n$ are i.i.d. copies of $A$ and $C$, independent of each other.
This is identical to \eqref{eqn:sumjump2} by setting $A_n = \beta_n$ and $C_n = J_n$.

The following corollary of the fact that a SSA gamma process is associated with an exponential generic jump, giving a representation of a gamma distributed random variable, is also given in \cite[example 3.8.2]{vervaat1979stochastic}.

\begin{corollary}[Vervaat \cite{vervaat1979stochastic}]\label{thm:vervaat}
    If $C \sim$ exp$(\lambda)$ and $A \sim$ beta$(\theta, 1)$ for some $\lambda, \theta > 0$, then the unique solution to the stochastic difference equation \eqref{eqn:vervaatsde} is $X \sim$ gamma$(\theta, \lambda)$.
\end{corollary}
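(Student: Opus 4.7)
The plan is to specialize the summation representation of Corollary 6.1 to the SSA gamma process and recognize the resulting series as Vervaat's solution of \eqref{eqn:vervaatsde}. First I would let $\Tsvar$ denote the $1$-SSA gamma process with $T(1) \sim$ gamma$(\theta, \lambda)$. By Example \ref{eg:gammakey}, this process has key function $k(x) = \theta e^{-\lambda x}$, hence finite rate $k(0+) = \theta$ and generic jump $J \sim$ exp$(\lambda)$.

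Next, I would apply the representation \eqref{eqn:sumjump2} of Corollary 6.1 to this particular $T$: with $\beta_k$ i.i.d. beta$(\theta,1)$ and $J_{1-n}$ i.i.d. copies of $J \sim$ exp$(\lambda)$, independent of the $\beta_k$'s,
\begin{equation}
T(1) \;\ed\; \sum_{n=1}^\infty \left(\prod_{k=1}^n \beta_k\right) J_{1-n}.
\end{equation}
Setting $A_k := \beta_k$ and $C_n := J_{1-n}$ turns this into Vervaat's series \eqref{vervaatsolution} with $A \sim$ beta$(\theta,1)$ and $C \sim$ exp$(\lambda)$. Hence $X := T(1) \sim$ gamma$(\theta, \lambda)$ is a distributional solution of \eqref{eqn:vervaatsde}.

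For the uniqueness assertion, I would invoke Vervaat's general result from \cite{vervaat1979stochastic}: when $|A| < 1$ a.s. and $\E \log^+ |C| < \infty$, the iterated series \eqref{vervaatsolution} converges a.s. and is the unique distributional fixed point of \eqref{eqn:vervaatsde}. Both hypotheses hold trivially here, since $A \in (0,1)$ and $C$ is integrable. There is no real obstacle: the statement is essentially a translation of the hold-jump description of Theorem \ref{thm:ktothetaJ} for the gamma process into the language of multiplicative stochastic difference equations. As a quick independent sanity check one may verify the fixed-point equation directly via Laplace transforms, since the beta--gamma algebra yields $\E e^{-u A(X+C)} = (\lambda/(\lambda+u))^\theta$ when $X \sim$ gamma$(\theta,\lambda)$, $A \sim$ beta$(\theta,1)$ and $C \sim$ exp$(\lambda)$ are independent.
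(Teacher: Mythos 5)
Your proposal is correct and follows essentially the same route as the paper, which presents Corollary \ref{thm:vervaat} as an immediate consequence of the summation representation \eqref{eqn:sumjump2} combined with the fact (Example \ref{eg:gammakey}) that the SSA gamma process has generic jump $J \sim$ exp$(\lambda)$, with uniqueness deferred to Vervaat's theory of the equation \eqref{eqn:vervaatsde}. The Laplace-transform sanity check via beta--gamma algebra is a nice optional addition but not needed.
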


However, Vervaat did not make any connection with SSA processes in his work on stochastic difference equations, as he did not treat $\left(\prod_{k = 1}^n A_k \right)$ as a time index, and he did not discuss selfdecomposability of $X$, only infinitely divisibility.

\paragraph{Extremal processes}

As mentioned in Section \ref{sec:intro}, the $1$-SSA exponential process with \ssarate $\theta = 1$ arises in the theory of extremal process introduced by Dwass \cite{dwass1964extremal}.

Starting from an i.i.d. sequence $(X_n, n = 1, 2, \ldots)$ of continuous random variables, it is elementary that the {\em record sequence} $(M_n := \max_{1 \le k \le n } X_k, n = 1,2, \ldots)$ is a Markov chain with state space $\R$ and transition probabilities specified by 
\begin{equation}\label{eqn:mncdf}
    \P( M_{n+m} \le y \mid M_n = x) = F^m (y) 1(x \le y),
\end{equation}
where $F^t(y)$ is the $t$-th power of the common cumulative distribution function (c.d.f.) of the $X_i$.
It was observed in 1964 by Dwass \cite{dwass1964extremal} and Lamperti \cite{lamperti1964extreme} that the record process can be generalized to a time-homogeneous pure jump-type Markov process, known as the {\em extremal process} $M := (M(t), t > 0)$, described by its entrance law
\begin{equation}\label{eqn:entrancelaw}
    \P( M(t) \le y ) = F^t(y),
\end{equation}
and the hold-jump description: conditional on $M(s) = x$,
\begin{itemize}
    \item `Hold' - at level $x$ for an exponential time $H_x$ with rate $Q(x):= - \log F(x)$
    so that
    \begin{equation}\label{holds}
        \P( H_x > t ) = \P( M(t) \le x ) = F^t(x) =  e^{- t Q(x) } ;
    \end{equation}
    \item `Jump' - at time $H_x$ to a state $L_x:= M({s + H_x})$ with distribution
    \begin{equation}\label{jx}
        \P( L_x > b ) = \frac{ Q(b) }{Q(x) } \qquad ( b \ge x ).
    \end{equation}
\end{itemize}
See Shorrock \cite{shorrock1974discrete} or Kallenberg \cite[Chapter 13]{kallenberg2021foundations} for more details.

\begin{proposition}[Dwass \cite{dwass1964extremal} Resnick and Rubinovitch \cite{resnick1973structure}
Shorrock \cite{shorrock1974discrete} ]\label{thm:poissonjumps}
    Let $((T_z, M_z), z \in \Z)$ be a listing indexed by integers $\Z$ of the times $T_z$ of jumps of $(M(t), t \ge 0)$  
    and the corresponding record levels $M_z := M({T_z})$, with $T_z < T_{z+1}$. Then 
    \begin{equation}\label{mtzpoints}
        range ( (M_z, T_{z+1} - T_z) , z \in \Z ) \mbox{ is } PPP( Q(dm) e^{- Q(m) t } dt ).
    \end{equation}
    In particular,
    \begin{itemize}
        \item[(I)] the random set of record times $T_z$ is a scale invariant PPP on $\R_+$ with rate $1$;
        \item[(II)] the random set of record levels $range ( M_z, z \in \Z )$ is $PPP( Q(dm) /Q(m))$;
        \item[(III)] the random set of holding times at these record levels $range ( T_{z+1} - T_z, z \in \Z )$ is a scale invariant PPP on $\R_+$ with rate $1$.
    \end{itemize}
\end{proposition}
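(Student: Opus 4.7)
The plan is to realize $M$ via a point-process construction, identify the jumps of $M$ with the time-upper records in that construction, and then read off the PPP structure of $\{(M_z, H_z)\}$ (with $H_z := T_{z+1}-T_z$) by a marking argument; all three marginal claims (I)--(III) then drop out by projection.

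First, take $\Pi$ to be a Poisson point process on $\R_+ \times \R$ with intensity $d\tau \otimes \mu(dy)$, where $\mu$ is the measure with right tail $\mu((y, \infty)) = Q(y) = -\log F(y)$, and set $M(t) := \sup\{y : (\tau, y) \in \Pi, \tau \le t\}$. The entrance law \eqref{eqn:entrancelaw} falls out from
\begin{equation*}
\P(M(t) \le y) = \P\bigl(\Pi((0, t] \times (y, \infty)) = 0\bigr) = e^{-t Q(y)} = F^t(y),
\end{equation*}
and the hold-jump rules \eqref{holds} and \eqref{jx} follow from the Poisson independence property, so this $M$ has the distribution specified in the statement. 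A point $(\tau, y) \in \Pi$ coincides with some $(T_z, M_z)$ precisely when the shadow region $(0, \tau) \times (y, \infty)$ is devoid of other points of $\Pi$. Combining this characterization with the classical theory of records of a planar Poisson process (Kallenberg \cite[Ch.~13]{kallenberg2021foundations}) then gives that the records themselves form a PPP on $\R_+ \times \R$ with intensity $e^{-\tau Q(y)}\, d\tau \otimes \mu(dy)$: the mean measure comes from Campbell's formula applied with $\P((\tau, y)\text{ is a record}) = e^{-\tau Q(y)}$, and the full Poisson structure follows from the independence of $\Pi$ on disjoint shadow regions.

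To pass to the main claim \eqref{mtzpoints}, mark each record $(T_z, M_z)$ by its subsequent holding time $H_z$. By the hold-jump description and the strong Markov property, conditional on $M_z = m$ this mark is independent of everything else and distributed as $\mathrm{Exp}(Q(m))$. The marking theorem, combined with integrating out $T_z$ via $\int_0^\infty e^{-\tau Q(m)}\, d\tau = 1/Q(m)$, then yields that $\{(M_z, H_z)\}$ is a PPP on $\R \times \R_+$ with intensity
\begin{equation*}
\frac{\mu(dm)}{Q(m)} \cdot Q(m)\, e^{-Q(m) t}\, dt = Q(dm)\, e^{-Q(m) t}\, dt,
\end{equation*}
as required. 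Projection now gives (II) by integrating out $t$ via $\int_0^\infty e^{-Q(m)t}\, dt = 1/Q(m)$, producing intensity $Q(dm)/Q(m)$; (III) by the substitution $q = Q(m)$ which turns $\int Q(dm)\, e^{-Q(m)t}$ into $\int_0^\infty e^{-qt}\, dq = 1/t$; and (I) by projecting the record PPP $\{(T_z, M_z)\}$ itself onto $\tau$ and applying the same substitution.

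The main obstacle is the step showing that the records of $\Pi$ form a Poisson point process, rather than merely a point process with the correct mean measure $e^{-\tau Q(y)}\, d\tau \otimes \mu(dy)$. This is the one nontrivial input from the theory of planar Poisson records; the remainder of the argument is bookkeeping with Campbell's formula, the strong Markov property, and the marking theorem.
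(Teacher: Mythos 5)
The paper offers no proof of this proposition --- it is quoted from Dwass, Resnick--Rubinovitch and Shorrock, with a pointer to Kallenberg --- so your attempt must stand on its own, and it contains a genuine error. The fatal step is the claim that the record points $\{(T_z,M_z)\}$ of the planar Poisson process $\Pi$ themselves form a Poisson point process on $\R_+\times\R$ with intensity $e^{-\tau Q(y)}\,d\tau\,\mu(dy)$. Your mean-measure computation via the Mecke/Campbell formula is correct, but the record point process is not Poisson: any two records are comparable in the coordinatewise partial order (if $T_z<T_{z'}$ and $M_z>M_{z'}$, then $(T_z,M_z)$ lies in the shadow $(0,T_{z'})\times(M_{z'},\infty)$ and disqualifies $(T_{z'},M_{z'})$), so the records almost surely form a monotone staircase; whereas a Poisson process whose intensity is equivalent to the product $d\tau\otimes\mu(dy)$ on the quadrant places, with positive probability, one point in each of two rectangles that are incomparable in that order. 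This is exactly why the proposition asserts Poissonianity for the pairs $(M_z,T_{z+1}-T_z)$ and for certain projections, but never for $(T_z,M_z)$. The appeal to ``independence of $\Pi$ on disjoint shadow regions'' cannot rescue the step, since the shadows of distinct records overlap substantially.

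Because the planar record process is not Poisson, the marking theorem cannot be applied to it, and (I) certainly does not follow by ``projecting the record PPP onto $\tau$'': since $T_{z+1}=T_z+H_z$, statement (I) together with (III) is precisely the $\theta=1$ spacings identity \eqref{eqn:sips} --- the nontrivial content here --- and cannot be extracted from \eqref{mtzpoints} by a pointwise map. The repair is to reverse your logical order. First prove (II) directly: by \eqref{jx} the ratios $Q(M_{z+1})/Q(M_z)$ are i.i.d. uniform on $(0,1)$, so Lemma \ref{thm:sipbeta} with $\theta=1$ shows $\{Q(M_z)\}$ is a scale invariant PPP of rate $1$, hence $\{M_z\}$ is $PPP(Q(dm)/Q(m))$. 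Then attach the marks $H_z$, which conditionally on the levels are independent $\mathrm{Exp}(Q(M_z))$ by \eqref{holds} and the strong Markov property; the marking theorem applied to the genuinely Poisson process $\{M_z\}$ yields \eqref{mtzpoints}, and your projection computations for (II) and (III) are then legitimate applications of the mapping theorem. Part (I) needs its own argument, e.g. the avoidance probability $\P(\mbox{no jump in }(s,t])=\E F^{t-s}(M(s))=s/t$ extended to finite unions of intervals via the Markov property of $M$, or conditioning on the restriction of $\Pi$ to $(0,t]\times\R$ and invoking R\'enyi's independence of record indicators.
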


As indicated by Dwass and Lamperti, the extremal Markov process associated with each continuous distribution $F$ on the line is essentially the same as that associated with every other continuous distribution $F'$, via the monotonic transformation
\begin{equation}\label{cdftransform}
    M' = \psi(M), \quad \text{for } \psi \text{ with } F'(\cdot) = F( \psi \in \cdot ).
\end{equation}

To illustrate the point and see its relation with our Theorems \ref{thm:GammatoPPP} and \ref{thm:ssarate}, consider the case of Gnedenko's extreme value distribution, that is the distribution of $\eps^{-1}$ for $\eps$ exponential with mean $1$:
\begin{equation}\label{eqn:gnedenkoF}
    F(y) = \exp( - y^{-1} )  = \P( \eps^{-1} \le y ) \qquad ( y > 0 ),
\end{equation}
hence the rate in \eqref{holds} is
\begin{equation} \label{eqn:gnedenkoQ}
    Q(x) = -\log F(x) = \frac 1x,
\end{equation}
and the jump has distribution
\begin{equation}
    \P( J_x > b ) = \frac{ Q(b) }{Q(x) } = \frac xb \qquad ( b \ge x ),
\end{equation}
namely $x / J_x \sim$ beta$(1, 1)$.
It is easy to check that the functional inverse (which means the `hold' and `jump' are swapped) coincides with the hold-jump description \eqref{gammasrec} \eqref{gammatrec} of a $1$-SSA gamma process $\Tsvar$ with \ssarate $1$ and $T(s) \ed s\eps \sim$ exp$(s^{-1})$.

Therefore, the path of an extremal process associated with \eqref{eqn:gnedenkoF} has the same distribution as the functional inverse of the path of a standard $1$-SSA exponential process. This matches Proposition \ref{thm:poissonjumps}(I) with Theorem \ref{thm:GammatoPPP}, and Proposition \ref{thm:poissonjumps}(III) with Theorem \ref{thm:ssarate}(III). Lastly, in this case the $PPP( Q(dm) /Q(m))$ in Proposition \ref{thm:poissonjumps}(II) is a scale invariant PPP on $\R_+$ with rate $1$, as in Theorem \ref{thm:ssarate}(II).

It may also be of interest to replace the i.i.d. sequence above with an inhomogeneous Markov sequence $(X_n, n = 1, 2, \ldots)$ defined as follows:
\begin{itemize}
    \item The first term $X_1$ has c.d.f. $F^\theta$;
    \item Conditionally on $M_n = x$, let $F_{x+}$ be the c.d.f. of $X$ given $X>x$
    \begin{equation}
        F_{x+}(y) := (F(y) - F(x))/(1 - F(x)) \qquad ( y > x ),
    \end{equation}
    and $F_{x-}$be the one given $X \le x$
    \begin{equation}
        F_{x-}(y) := F(y) / F(x) \qquad ( y \le x ).
    \end{equation}
    Then the distribution of $X_{n+1}$ is the mixture with weights $1 - F(x)$ and $F(x)$ of the distribution of a random variable with c.d.f. $F_{x+}^\theta$, and distribution of a random variable with c.d.f. $F_{x-}$, i.e.
    \begin{equation} \label{otherinhomo}
        \P(X_{n+1} \le y \mid M_n = x) = (1 - F(x)) F_{x+}^\theta(y) + F(x) F_{x-}(y).
    \end{equation}
\end{itemize}

An example of this sequence is obtained by putting $X \sim$ Uniform$(0, 1)$. Then $X_1 \sim$ beta$(1, \theta)$ and conditionally on $M_n = x$, $X_{n+1}$ is the mixture with weights $1-x$ and $x$ of Uniform$(0, 1-x)$ and $x + (1-x) \beta$ for $\beta \sim$ beta$(1, \theta)$.

\begin{theorem} \label{thm:discreteinhomo}
    Suppose $X$ is a real-valued random variable without atoms, and the sequence $(X_n, n = 1, 2, \ldots)$ follows the inductive construction above.
    Then the maximum indicators $(B_n, n = 1, 2, \ldots)$ form a sequence of independent Bernoulli random variables with $\E B_n = \theta / (\theta + n - 1)$.
\end{theorem}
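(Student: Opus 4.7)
My plan is to reduce to the uniform case via the transformation $U_n := F(X_n)$, then analyze the record indicators by induction on $n$. Since $X$ has no atoms, $F$ is continuous and the transformation is order-preserving, so the event $\{X_n > M_{n-1}\}$ coincides with $\{U_n > V_{n-1}\}$ where $V_{n-1} := \max_{k \le n-1} U_k$. Thus without loss of generality I may assume $X$ is uniform on $[0,1]$. Under this reduction, $U_1$ has density $\theta u^{\theta - 1}$ on $[0,1]$, and given $V_n = v$, the next observation $U_{n+1}$ is uniform on $[0,v]$ with probability $v$ (no new record), and otherwise equals $v + (1-v) W$ with $W \sim$ beta$(\theta, 1)$ independent of the past, by a direct reading of the mixture \eqref{otherinhomo}.

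The inductive invariant I plan to carry is the joint law of $V_n$ together with the indicators $(B_1, \ldots, B_n)$: namely, that $V_n$ has an explicit beta distribution whose parameter grows linearly in $n$, that the $B_k$'s are mutually independent Bernoullis with the stated parameters, and that $V_n$ is independent of $(B_1, \ldots, B_n)$. The base case $n = 1$ is immediate since $V_1 = U_1 \sim$ beta$(\theta, 1)$ and $B_1 \equiv 1$. For the inductive step, I would compute the conditional joint law of $(V_{n+1}, B_{n+1})$ given $V_n = v$ from the transition kernel above, and then integrate against the marginal of $V_n$ supplied by the induction hypothesis to obtain the unconditional joint law of $(V_{n+1}, B_{n+1}, B_1, \ldots, B_n)$.

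The main obstacle is pinning down the right marginal of $V_n$ so that this integration closes the induction cleanly: the beta density of $V_n$ must interact with the mixture weights $v$ and $1-v$, and with the beta$(\theta, 1)$ law of $W$, so as to simultaneously produce a beta marginal of the correct next order for $V_{n+1}$, a constant conditional probability $\theta/(\theta + n)$ of a new record at step $n+1$, and independence of these two outputs. This is essentially a beta--gamma algebra identity in the spirit of the argument used in the proof of Theorem~\ref{thm:GammatoPPP}. A more structural alternative I would try in parallel is to recognize the construction as a Chinese restaurant / Ewens sampling scheme with parameter $\theta$, in which the $B_n$ are precisely the classical ``new table'' indicators; the known independence of those indicators would then deliver both the marginals and the full joint independence, with the transformation back through $F$ supplying the record interpretation on the original scale.
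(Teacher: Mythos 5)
Your reduction to the uniform case via $U_n := F(X_n)$ is exactly the paper's argument: its entire proof of this theorem consists of that observation (no atoms, so the monotone map $F$ preserves the record structure) plus a citation of Najnudel and Pitman for $X \sim$ Uniform$(0,1)$. Your plan to then prove the uniform case from scratch by induction is a genuinely different, self-contained route, and the inductive scheme itself --- carry the joint law of $V_n$ and $(B_1,\ldots,B_n)$, show $V_n$ is beta and independent of the indicators --- is the right one; it does close, via the beta product identity beta$(a,b)\cdot$beta$(a+b,c)=$ beta$(a,b+c)$, with the invariant $V_n \sim$ beta$(n,\theta)$, density proportional to $v^{n-1}(1-v)^{\theta-1}$. (Your fallback of identifying the $B_n$ with Chinese-restaurant ``new table'' indicators is essentially the paper's citation in disguise.)

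There is, however, a genuine gap: the distributional inputs you feed into the induction make the claim false at $n=2$. You take $U_1$ with density $\theta u^{\theta-1}$ and record increment $W \sim$ beta$(\theta,1)$. Since the mixture \eqref{otherinhomo} puts weight $1-u$ on the above-$u$ component, $\P(B_2=1) = \E[1-U_1] = 1/(\theta+1)$, not $\theta/(\theta+1)$; no choice of invariant can rescue this (and as $\theta\to\infty$ your version makes records rare, whereas $\theta/(\theta+n-1)\to 1$). The fault originates in an inconsistency in the paper itself: the displayed construction puts the exponent $\theta$ on the c.d.f.\ ($F^\theta$, $F_{x+}^\theta$), but the worked example ($X_1 \sim$ beta$(1,\theta)$, increment $\beta \sim$ beta$(1,\theta)$) and the theorem's conclusion require it on the survival function, i.e.\ $X_1$ with c.d.f.\ $1-(1-F)^\theta$ and the above-$x$ component with c.d.f.\ $1-(1-F_{x+})^\theta$. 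With the corrected inputs $U_1 \sim$ beta$(1,\theta)$ and $W \sim$ beta$(1,\theta)$ (density $\theta(1-u)^{\theta-1}$ in both cases), your invariant works: $\E[1-V_n]=\theta/(\theta+n)$ gives the Bernoulli parameter; conditioning on $B_{n+1}=0$ size-biases beta$(n,\theta)$ by $v$ to beta$(n+1,\theta)$; conditioning on $B_{n+1}=1$ size-biases it by $1-v$ to beta$(n,\theta+1)$, after which $1-V_{n+1}=(1-V_n)(1-W)$ with $1-W\sim$ beta$(\theta,1)$ lands back on beta$(n+1,\theta)$. Since the law of $V_{n+1}$ is the same on both events and the transition depends on the past only through $V_n$, the independence propagates. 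You need to fix the inputs and actually carry out this computation; as written, the proposal targets a version of the construction for which the stated conclusion is false unless $\theta=1$.
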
 

This is a generalization to Najnudel and Pitman \cite[Corollary 1.4]{najnudel2019feller} where they proved the same result for $X \sim$ Uniform($0, 1$).
This generalization works because the distribution of $X$ has no atoms, hence has no influence on record times.

It is not hard to construct a time-inhomogeneous extremal process $M$ associated with $(X_n, n = 1, 2, \ldots)$ and check that the path of such extremal process has the same distribution as the functional inverse of the path of a $1$-SSA gamma process $\Tsvar$ with $T(1)\sim$ gamma$(\theta, 1)$.
In this case, Theorem \ref{thm:discreteinhomo} can be read as a discrete-time analogue of Theorem \ref{thm:GammatoPPP}, and an analogue of Proposition \ref{thm:poissonjumps} can also be easily given by replacing the rate in (I) and (III) by $\theta$, and replacing the PPP in (II) by $PPP(\theta ~ Q(dm) /Q(m))$.

\paragraph{Scale invariant point processes}

Scale invariant point processes, and scale invariant random sets, including the scale invariant PPP, were 
studied in \cite{pitman1996random,gnedin2005self} by considering the random partitions of $(0, 1)$ related
to the Poisson-Dirichlet distribution.
It was observed in \cite{pitman1996random} that a random closed set is scale invariant if and only if the associated age process is $1$-self-similar, while \cite{gnedin2005self} remarked on the relationship between scale invariant PPP and records.

\paragraph{The scale invariant Poisson spacings lemma for general $\theta \neq 1$}

The formulation of the Theorem \ref{thm:GammatoPPP} for general $\theta >0$ was suggested by proofs of the scale invariant Poisson spacings Theorem \ref{thm:sips}, first indicated by Arratia \cite{arratia1998central, arratia2002amount}, and detailed by Arratia, Barbour and Tavar\'{e} \cite{arratia2006tale}, where they make use of a Poisson point process on $(0, \infty)^2$ which turns out to be our $\eta$ in \eqref{eqn:SSAPPPrep}.
Based on \cite{arratia2006tale}, Gnedin \cite[Section 4]{gnedin2008corners} pointed out that the scale invariant Poisson spacings theorem for positive integer $\theta$ follows from a specialization of Ignatov's theorem in the form of \cite[Corollary 5.1]{goldie1984record}.

\paragraph{Feller's coupling and the Ewens sampling formula}

The parameter $\theta$ in the Poisson spacings theorem is related to the Ewens$(\theta)$ distribution \cite{ewens1972sampling} as a generalization to the uniform random permutation of $[n]$. Feller \cite{feller1945fundamental} provided a coupling between the counts of cycles of various sizes in a uniform random permutation of $[n]$ and the spacings between successes in a sequence of $n$ independent Bernoulli$(k^{-1})$ trials at the $k$th trial.
Informally, this sequence of independent Bernoulli trials is a discrete analogue of the scale invariant PPP with rate $1$ relative to $x^{-1} dx$.

Ignatov \cite{ignatov1981point} proved that in an infinite sequence of independent Bernoulli$(n^{-1})$ trials, as the indicators of record values in an i.i.d. sequence, the numbers of spacings of length $k$ between successes/records are independent Poisson variables with means $k^{-1}$. 
This Poisson sequence provides another discrete analogue of the scale invariant PPP with rate $1$.
It is interesting that this discrete result was obtained many years after theory of extremal processes by Dwass \cite{dwass1964extremal}.
Ignatov's result was generalized by Arratia, Barbour and Tavar\'e \cite{arratia2006tale} in the study of cycles of (non-uniform) random permutations governed by the Ewens$(\theta)$ distribution, i.e. a permutation is weighted $\theta ^ k$ if there are $k$ cycles.
See Najnudel and Pitman \cite{najnudel2019feller} for details of the coupling between the random permutations governed by the Ewens$(\theta)$ distribution and the sequence of inhomogeneous Bernoulli$(\theta(\theta + n - 1) ^{-1})$ trials, as mentioned in Theorem \ref{thm:discreteinhomo}.

\bibliographystyle{acm} 
\bibliography{SelfSimilar}

\begin{thebibliography}{10}

\bibitem{arratia1998central}
{\sc Arratia, R.}
\newblock On the central role of scale invariant {P}oisson processes on
  {$(0,\infty)$}.
\newblock In {\em Microsurveys in discrete probability ({P}rinceton, {NJ},
  1997)}, vol.~41 of {\em DIMACS Ser. Discrete Math. Theoret. Comput. Sci.}
  Amer. Math. Soc., Providence, RI, 1998, pp.~21--41.

\bibitem{arratia2002amount}
{\sc Arratia, R.}
\newblock On the amount of dependence in the prime factorization of a uniform
  random integer.
\newblock In {\em Contemporary combinatorics}, vol.~10 of {\em Bolyai Soc.
  Math. Stud.} J\'{a}nos Bolyai Math. Soc., Budapest, 2002, pp.~29--91.

\bibitem{arratia2006tale}
{\sc Arratia, R., Barbour, A.~D., and Tavar\'{e}, S.}
\newblock A tale of three couplings: {P}oisson-{D}irichlet and {GEM}
  approximations for random permutations.
\newblock {\em Combin. Probab. Comput. 15}, 1-2 (2006), 31--62.

\bibitem{bertoin2002entrance}
{\sc Bertoin, J., and Yor, M.}
\newblock The entrance laws of self-similar {M}arkov processes and exponential
  functionals of {L}\'{e}vy processes.
\newblock {\em Potential Anal. 17}, 4 (2002), 389--400.

\bibitem{carr2007self}
{\sc Carr, P., Geman, H., Madan, D.~B., and Yor, M.}
\newblock Self-decomposability and option pricing.
\newblock {\em Math. Finance 17}, 1 (2007), 31--57.

\bibitem{chaumont2012exercises}
{\sc Chaumont, L., and Yor, M.}
\newblock {\em Exercises in probability: a guided tour from measure theory to
  random processes, via conditioning}, second~ed., vol.~35 of {\em Cambridge
  Series in Statistical and Probabilistic Mathematics}.
\newblock Cambridge University Press, Cambridge, 2012.

\bibitem{dwass1964extremal}
{\sc Dwass, M.}
\newblock Extremal processes.
\newblock {\em Ann. Math. Statist. 35\/} (1964), 1718--1725.

\bibitem{dwass1966extremal}
{\sc Dwass, M.}
\newblock Extremal processes. {II}.
\newblock {\em Illinois J. Math. 10\/} (1966), 381--391.

\bibitem{dwass1974extremal}
{\sc Dwass, M.}
\newblock Extremal processes. {III}.
\newblock {\em Bull. Inst. Math. Acad. Sinica 2\/} (1974), 255--265.
\newblock Collection of articles in celebration of the sixtieth birthday of Ky
  Fan.

\bibitem{ewens1972sampling}
{\sc Ewens, W.~J.}
\newblock The sampling theory of selectively neutral alleles.
\newblock {\em Theoret. Population Biol. 3\/} (1972).

\bibitem{feller1945fundamental}
{\sc Feller, W.}
\newblock The fundamental limit theorems in probability.
\newblock {\em Bull. Amer. Math. Soc. 51\/} (1945), 800--832.

\bibitem{gnedin2005self}
{\sc Gnedin, A., and Pitman, J.}
\newblock Self-similar and {M}arkov composition structures.
\newblock {\em Zap. Nauchn. Sem. S.-Peterburg. Otdel. Mat. Inst. Steklov.
  (POMI) 326}, Teor. Predst. Din. Sist. Komb. i Algoritm. Metody. 13 (2005),
  59--84, 280--281.

\bibitem{gnedin2008corners}
{\sc Gnedin, A.~V.}
\newblock Corners and records of the {P}oisson process in quadrant.
\newblock {\em Electron. Commun. Probab. 13\/} (2008), 187--193.

\bibitem{goldie1984record}
{\sc Goldie, C.~M., and Rogers, L. C.~G.}
\newblock The {$k$}-record processes are i.i.d.
\newblock {\em Z. Wahrsch. Verw. Gebiete 67}, 2 (1984), 197--211.

\bibitem{groeneboom1983concave}
{\sc Groeneboom, P.}
\newblock The concave majorant of {B}rownian motion.
\newblock {\em Ann. Probab. 11}, 4 (1983), 1016--1027.

\bibitem{ignatov1981point}
{\sc Ignatov, Z.}
\newblock Point processes generated by order statistics and their applications.
\newblock In {\em Point processes and queuing problems ({C}olloq., {K}eszthely,
  1978)}, vol.~24 of {\em Colloq. Math. Soc. J\'{a}nos Bolyai}. North-Holland,
  Amsterdam-New York, 1981, pp.~109--116.

\bibitem{jeanblanc2002self}
{\sc Jeanblanc, M., Pitman, J., and Yor, M.}
\newblock Self-similar processes with independent increments associated with
  {L}\'{e}vy and {B}essel.
\newblock {\em Stochastic Process. Appl. 100\/} (2002), 223--231.

\bibitem{jurek1997selfdecomposability}
{\sc Jurek, Z.~J.}
\newblock Selfdecomposability: an exception or a rule?
\newblock {\em Ann. Univ. Mariae Curie-Sk\l odowska Sect. A 51}, 1 (1997),
  93--107.

\bibitem{jurek2019background}
{\sc Jurek, Z.~J.}
\newblock Background driving distribution functions and series representations
  for log-gamma selfdecomposable random variables.
\newblock {\em Teor. Veroyatn. Primen. 67}, 1 (2022), 134--149.

\bibitem{jurek1983integral}
{\sc Jurek, Z.~J., and Vervaat, W.}
\newblock An integral representation for self-decomposable {B}anach space
  valued random variables.
\newblock {\em Z. Wahrsch. Verw. Gebiete 62}, 2 (1983), 247--262.

\bibitem{kallenberg2021foundations}
{\sc Kallenberg, O.}
\newblock {\em Foundations of modern probability}, vol.~99 of {\em Probability
  Theory and Stochastic Modelling}.
\newblock Springer, Cham, [2021] \copyright 2021.
\newblock Third edition [of 1464694].

\bibitem{khintchine1938limit}
{\sc Khintchine, A.~Y.}
\newblock Limit laws of sums of independent random variables.
\newblock {\em ONTI, Moscow,(Russian)\/} (1938).

\bibitem{lamperti1962semi}
{\sc Lamperti, J.}
\newblock Semi-stable stochastic processes.
\newblock {\em Trans. Amer. Math. Soc. 104\/} (1962), 62--78.

\bibitem{lamperti1964extreme}
{\sc Lamperti, J.}
\newblock On extreme order statistics.
\newblock {\em Ann. Math. Statist. 35\/} (1964), 1726--1737.

\bibitem{levy1937theorie}
{\sc L{\'e}vy, P.}
\newblock {\em Th{\'e}orie de l'addition des variables al{\'e}atoires}.
\newblock Paris: Gauthier-Villars, 1937.

\bibitem{lukacs1955characterization}
{\sc Lukacs, E.}
\newblock A characterization of the gamma distribution.
\newblock {\em Ann. Math. Statist. 26\/} (1955), 319--324.

\bibitem{najnudel2019feller}
{\sc Najnudel, J., and Pitman, J.}
\newblock Feller coupling of cycles of permutations and {P}oisson spacings in
  inhomogeneous {B}ernoulli trials.
\newblock {\em Electron. Commun. Probab. 25\/} (2020), Paper No. 73, 11.

\bibitem{ouaki2021markovian}
{\sc Ouaki, M., and Pitman, J.}
\newblock Markovian structure in the concave majorant of brownian motion, 2021.

\bibitem{pitman2012greatest}
{\sc Pitman, J., and Ross, N.}
\newblock The greatest convex minorant of {B}rownian motion, meander, and
  bridge.
\newblock {\em Probab. Theory Related Fields 153}, 3-4 (2012), 771--807.

\bibitem{pitman1996random}
{\sc Pitman, J., and Yor, M.}
\newblock Random discrete distributions derived from self-similar random sets.
\newblock {\em Electron. J. Probab. 1\/} (1996), no. 4, approx. 28 pp.

\bibitem{resnick1973structure}
{\sc Resnick, S.~I., and Rubinovitch, M.}
\newblock The structure of extremal processes.
\newblock {\em Advances in Appl. Probability 5\/} (1973), 287--307.

\bibitem{sato1991self}
{\sc Sato, K.-i.}
\newblock Self-similar processes with independent increments.
\newblock {\em Probab. Theory Related Fields 89}, 3 (1991), 285--300.

\bibitem{sato1999levy}
{\sc Sato, K.-i.}
\newblock {\em L\'{e}vy processes and infinitely divisible distributions},
  vol.~68 of {\em Cambridge Studies in Advanced Mathematics}.
\newblock Cambridge University Press, Cambridge, 1999.
\newblock Translated from the 1990 Japanese original, Revised by the author.

\bibitem{sato1978distribution}
{\sc Sato, K.-i., and Yamazato, M.}
\newblock On distribution functions of class {$L$}.
\newblock {\em Z. Wahrsch. Verw. Gebiete 43}, 4 (1978), 273--308.

\bibitem{shorrock1974discrete}
{\sc Shorrock, R.~W.}
\newblock On discrete time extremal processes.
\newblock {\em Advances in Appl. Probability 6\/} (1974), 580--592.

\bibitem{tudor2013analysis}
{\sc Tudor, C.~A.}
\newblock {\em Analysis of variations for self-similar processes: A stochastic
  calculus approach}.
\newblock Probability and its Applications (New York). Springer, Cham, 2013.

\bibitem{vervaat1979stochastic}
{\sc Vervaat, W.}
\newblock On a stochastic difference equation and a representation of
  nonnegative infinitely divisible random variables.
\newblock {\em Adv. in Appl. Probab. 11}, 4 (1979), 750--783.

\bibitem{wolfe1982continuous}
{\sc Wolfe, S.~J.}
\newblock On a continuous analogue of the stochastic difference equation
  {$X_{n}=\rho X_{n-1}+B_{n}$}.
\newblock {\em Stochastic Process. Appl. 12}, 3 (1982), 301--312.

\end{thebibliography}

\end{document}